\tikzset{every state/.style={minimum size=0pt}}
\newtheorem{theorem}{Theorem}
\newtheorem{lemma}[theorem]{Lemma}
\newtheorem{corollary}[theorem]{Corollary}
\newtheorem{observation}[theorem]{Observation}
\newtheorem{proposition}[theorem]{Proposition}
\newtheorem{example}[theorem]{Example}
\newtheorem{definition}[theorem]{Definition}
\newtheorem{remark}[theorem]{Remark}
\newcommand{\defproblem}[3]{
  \vspace{3mm}
\noindent\fbox{
  \begin{minipage}{.97\textwidth}
  \begin{tabular*}{\textwidth}{@{\extracolsep{\fill}}lr} \textsc{#1} \\ \end{tabular*}
  {\bf{Input:}} #2  \\
  {\bf{Question:}} #3
  \end{minipage}
  }
  \vspace{2mm}
}
\begin{document}
\title{Bounds and Hardness Results for Conflict-free Choosability}

\author{ Shiwali Gupta, Rogers Mathew}
\affil {Department of Computer Science and Engineering, Indian Institute of Technology Hyderabad. \{cs21resch11002, rogers\}@iith.ac.in}

\date{}
\maketitle
\textbf{keywords:}{Conflict-free coloring \and list conflict-free coloring, conflict-free choosability, choice number, maximum degree, minimum degree, hardness. } 

\begin{abstract}
A \emph{(partial) conflict-free coloring} of a hypergraph $\mathcal{H}$ is an assignment of colors to (a subset of) the vertex set of $\mathcal{H}$ such that every hyperedge in $\mathcal{H}$ has a vertex whose color is distinct from every other vertex in that hyperedge. The minimum number of colors required for such a coloring is known as the \emph{(partial) conflict-free chromatic number} of $\mathcal{H}$. It is easy to see that the conflict-free chromatic number of a hypergraph is at most its partial conflict-free chromatic number plus one. Conflict-free coloring has also been studied on the open/closed neighborhood hypergraphs of a given graph under the name open/closed neighborhood conflict-free coloring. In this paper, we study partial and full list variants of conflict-free coloring where, for every vertex $v$, we are given a list of admissible colors $L_v$ such that $v$ is allowed to be colored only from $L_v$.
\\
 
It was shown by Pach and Tardos [Combinatorics, Probability and Computing, 2009] that for any constant $\epsilon > 0$, the closed-neighborhood conflict-free chromatic number of a graph $G$ is at most $O(\ln^{2 + \epsilon}\Delta)$, where $\Delta$ represents the maximum degree of $G$.
Later, Glebov, Szabó, and Tardos [Combinatorics, Probability and Computing, 2014] showed that there exist graphs $G$ that require $\Omega(\ln^2\Delta)$ colors for a closed neighborhood conflict-free coloring. Bhyravarapu, Kalyanasundaram, and Mathew [Journal of Graph Theory, 2021] bridged the gap between the upper and the lower bound. They showed that the closed-neighborhood conflict-free chromatic number of any graph $G$ is at most $O(\ln^2 \Delta)$. 
\\

In this paper, we extend the $O(\ln^2 \Delta)$ upper bound to the partial list variant of the closed-neighborhood conflict-free chromatic number.  Further, we establish computational complexity results concerning the list open/closed-neighborhood conflict-free chromatic numbers.

\end{abstract}

\section{Introduction}
\subsection{Conflict-free coloring}
\label{subsec_CF_coloring}

A \emph{partial conflict-free coloring} (or \emph{CF}$^*$ \emph{coloring}) of a hypergraph $\mathcal{H}=(V,\mathcal{E})$ using $k$ colors is an assignment $f: V'\rightarrow \{1, \ldots, k \}$, where $V'\subseteq  V$, such that every $E \in \mathcal{E}$ contains a point whose color is distinct from that of every other point in $E$. If $V' = V$, then we call $f$ a \emph{conflict-free coloring} (or \emph{CF coloring}) of $\mathcal{H}$. The minimum $k$ for which there is a (partial) CF coloring of $\mathcal{H}$ using $k$ colors is called the (resp., \emph{partial}) \emph{conflict-free chromatic number} of $\mathcal{H}$. We shall use $\chi_{CF}(\mathcal{H})$ (resp., $\chi_{CF}^*(\mathcal{H})$) to denote (resp., partial) conflict-free chromatic number of $\mathcal{H}$. Giving a new unused color to all the uncolored vertices in a partial CF coloring of $\mathcal{H}$ gives a CF coloring of $\mathcal{H}$. The observation below captures this.

\begin{observation}
 \label{obv_partial_vs_full_CF}   
 $\chi_{CF}(\mathcal{H}) \leq \chi^*_{CF}(\mathcal{H}) + 1$. 
\end{observation}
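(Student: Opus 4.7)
The plan is to exhibit, from any optimal partial CF coloring of $\mathcal{H}$, an explicit full CF coloring that uses exactly one additional color. Let $k=\chi^*_{CF}(\mathcal{H})$ and fix a partial CF coloring $f:V'\to\{1,\ldots,k\}$ with $V'\subseteq V$. Define $f':V\to\{1,\ldots,k+1\}$ by setting $f'(v)=f(v)$ for $v\in V'$ and $f'(v)=k+1$ for every $v\in V\setminus V'$.

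First I would verify that $f'$ is a (full) CF coloring. Pick any hyperedge $E\in\mathcal{E}$. Since $f$ is a partial CF coloring, $E$ contains some vertex $u\in V'$ with $f(u)$ distinct from the color of every other colored vertex in $E$; in particular $f(u)\in\{1,\ldots,k\}$. Under $f'$, every vertex of $E\setminus V'$ receives the fresh color $k+1\neq f(u)$, while every other vertex of $E\cap V'$ still has a color different from $f(u)$ by choice of $u$. Hence $f'(u)$ is distinct from $f'(w)$ for every $w\in E\setminus\{u\}$, so $E$ is conflict-free under $f'$.

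Since $f'$ uses at most $k+1$ colors, we obtain $\chi_{CF}(\mathcal{H})\le k+1=\chi^*_{CF}(\mathcal{H})+1$, as required.

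There is essentially no obstacle here; the only minor point is to make sure that even when multiple vertices of $E$ fall outside $V'$ and therefore share the color $k+1$, the argument still goes through, which it does because the witness $u$ is drawn from $V'$ and its color lies in $\{1,\ldots,k\}$, separating it from the newly introduced color class.
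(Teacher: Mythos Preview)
Your proof is correct and matches the paper's approach exactly: the paper simply notes that giving a new unused color to all the uncolored vertices in a partial CF coloring yields a CF coloring, and your argument spells out precisely this construction and verification.
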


The notion of conflict-free coloring is well studied with respect to \emph{open/closed neighborhood hypergraphs of graphs} (the edge set of such a hypergraph is the set of open/closed neighborhoods of each vertex of the graph under consideration).
Let $V(G)$ and $E(G)$ denote the vertex set and edge set of a graph $G$, respectively. For a vertex $v \in V(G)$, the set of neighbors of $v$ in $G$ is called the \emph{open neighborhood} of $v$. It is denoted by $N_G(v)$. The \emph{closed neighborhood} of $v$ is defined as $\{v\} \cup N_G(v)$. It is denoted by $N_G[v]$. 

A \emph{partial conflict-free open neighborhood coloring} (or \emph{CFON$^*$ coloring}) of $G$  is an assignment of colors to $V' \subseteq V(G)$ such that every vertex in $G$ sees a uniquely colored vertex  in its open neighborhood. If $V' = V(G)$, then we call it a \emph{CFON coloring}. The minimum number of colors required for a CFON$^*$ coloring (resp., CFON coloring) of $G$ is called the \emph{CFON$^*$ chromatic number} (resp., \emph{CFON chromatic number}) of $G$, denoted by $\chi^*_{ON}(G)$ (resp., $\chi_{ON}(G)$). 
Analogously, by replacing `open neighborhood' with `closed neighborhood' in the above definitions, we define CFCN$^*$ coloring, CFCN coloring, CFCN$^*$ chromatic number (denoted $\chi^*_{CN}(G)$), and  CFCN chromatic number (denoted $\chi_{CN}(G)$). 

Observation \ref{obv_partial_vs_full_CF} implies that for any graph $G$, $\chi_{ON}(G) \leq \chi^*_{ON}(G) + 1$ and $\chi_{CN}(G) \leq \chi^*_{CN}(G) + 1$.
The following inequality connects the CF open and closed chromatic numbers of a graph with each other.

\begin{proposition}[Inequality 1.3 in \cite{pach2009conflict}]
\label{prop:CFCNON}
(i) $\chi_{CN}(G) \leq 2 \chi_{ON}(G)$, and
(ii) $\chi^*_{CN}(G) \leq 2 \chi^*_{ON}(G)$. 
\end{proposition}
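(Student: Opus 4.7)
The plan is to take any CFON coloring $f : V(G) \to \{1, \ldots, k\}$ and lift it to a CFCN coloring $g$ on $2k$ colors; part (ii) will then follow by running the identical construction on the colored set $V' \subseteq V(G)$ of a CFON$^*$ coloring. The CFON hypothesis provides, for each vertex $v$, a witness neighbor $u(v) \in N_G(v)$ whose color $f(u(v))$ is unique in $N_G(v)$. I will try to recycle $u(v)$ as the witness for $v$ in the closed neighborhood sense as well; the only thing that can defeat this is having $f(v) = f(u(v))$, since then $v$ itself duplicates $u(v)$'s color inside $N_G[v]$.

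To repair exactly this coincidence, I introduce the color set $\{1, \ldots, k\} \cup \{1', \ldots, k'\}$ and constrain each $g(v) \in \{f(v), f(v)'\}$. Any neighbor $w$ of $v$ other than $u(v)$ disagrees with $u(v)$ on the underlying $f$-label, and hence cannot clash with $u(v)$ under $g$ regardless of how the two layers are used. So the only remaining requirement is $g(v) \neq g(u(v))$ whenever $f(v) = f(u(v))$, which is precisely a proper $2$-coloring of the auxiliary graph $H$ on $V(G)$ whose edges are $\{v, u(v)\}$ for those $v$ with $f(v) = f(u(v))$. Given a bipartition $V(G) = A \cup B$ of $H$, setting $g(v) = f(v)$ for $v \in A$ and $g(v) = f(v)'$ for $v \in B$ completes the construction.

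The technical heart of the argument, and the step I expect to be the main obstacle, is the claim that $H$ is bipartite. Since $v \mapsto u(v)$ is a function, each connected component of $H$ carries at most one cycle, and a short out-degree count shows that any such cycle must be a consistently oriented directed cycle in the functional digraph $v \to u(v)$, along which all vertices share a single $f$-color. The CFON uniqueness property then rules out any cycle of length at least $3$: on a cycle $v_1 \to v_2 \to \cdots \to v_\ell \to v_1$ with $\ell \geq 3$, the vertex $v_1$ sees the distinct same-colored neighbors $v_2$ and $v_\ell$ in $N_G(v_1)$, contradicting the uniqueness of $f(v_2) = f(u(v_1))$. Hence $H$ is a forest, so bipartite, $g$ is well-defined, and a routine check of the closed-neighborhood condition confirms that it is a CFCN coloring on $2k$ colors, proving (i). Part (ii) is obtained by repeating the argument verbatim with $V(G)$ replaced by $V'$ throughout, since the CFON$^*$ property supplies $u(v) \in V'$ for every $v$ and the acyclicity argument for $H$ only uses uniqueness among colored neighbors.
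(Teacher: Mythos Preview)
The paper does not supply its own proof of this proposition; it is quoted from \cite{pach2009conflict} and used as a black box. So there is nothing in the present paper to compare your argument against.

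That said, your argument is correct. The two places worth spelling out a bit more in a final write-up are exactly the ones you flagged: (a) that any undirected cycle in $H$ must be a consistently oriented directed cycle, and (b) that such a cycle of length $\ell\geq 3$ violates the CFON witness property at $v_1$. For (a), the clean justification is a counting one: a cycle on $\ell$ vertices has $\ell$ edges, each edge of $H$ is of the form $\{v,u(v)\}$ and hence is ``owned'' by one of its endpoints, and each vertex owns at most one edge of $H$; so every cycle vertex owns exactly one cycle edge, its out-edge $\{v,u(v)\}$ lies on the cycle, and hence each cycle vertex has out-degree $1$ and in-degree $1$ along the cycle. For (b), $u(v_1)=v_2$ gives $v_2\in N_G(v_1)$, and $u(v_\ell)=v_1$ gives $v_\ell\in N_G(v_1)$; since $\ell\geq 3$ these are distinct neighbors of $v_1$ sharing the colour $f(u(v_1))$, contradicting uniqueness. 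The $2$-cycle case is vacuous because $H$ is a simple undirected graph. Your extension to part~(ii) is also fine: restricting the vertex set of $H$ to $V'$ preserves the ``each vertex owns at most one edge'' property, and the contradiction in (b) uses only that $v_2,v_\ell\in N_G(v_1)\cap V'$, which holds since cycle vertices lie in $V'$.
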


The example given below shows that there are graphs whose CFON chromatic number is arbitrarily large compared to its CFCN chromatic number. 

\begin{example}[\cite{pach2009conflict}]
\label{ex:K_n subdivided}
Let $K_n$ denote the complete graph on $n$ vertices. Let $K_n^{1/2}$ denote the graph obtained by subdividing every edge of $K_n$ exactly once. Then, it is known that (i) $\chi_{ON}(K_n^{1/2}) = n$, and (ii) $\chi_{CN}(K_n^{1/2}) = 2$. 
\end{example}

Partial conflict-free coloring on open/closed neighborhood is NP-complete for planar \cite{abel2018conflict} and chordal graphs \cite{bhyravarapu2021conflictinter, bhyravarapu2025conflict}, efficiently solvable for outerplanar \cite{abel2018conflict}, and interval graphs \cite{bhyravarapu2021conflictinter, bhyravarapu2022conflict}. 
It was shown in \cite{gargano2015complexity} that deciding whether a graph has a conflict-free coloring with two colors is NP-complete for both closed and open neighborhoods. 
Moreover, CFCN variant is hard to approximate within a factor less than $3/2$, unless $\mathsf{P} = \mathsf{NP}$. The CFON variant cannot be approximated within a factor $n^{1/2 - \varepsilon}$ for any $\varepsilon > 0$, unless $\mathsf{P} = \mathsf{NP}$.

Conflict-free coloring was first studied by Even et al. \cite{Even2002} in 2004 and was further studied in many contexts (see \cite{cheilaris2009conflict, glebov2014conflict, pach2009conflict, bhyravarapu2021short, dkebski2022conflict, bhyravarapu2022conflict, LEVTOV20091521, ElbassioniM06,pach2003conflict, har2003conflict, alon2006conflict}).  See Smorodinsky \cite{smorosurvey} for a survey on conflict-free coloring and its applications. Conflict-free coloring has found applications in frequency assignment problems for cellular networks, in battery consumption-related optimization problems in sensor networks, in vertex ranking problems that find applications in VLSI design and operations research, in pliable index coding problem in coding theory \cite{krishnan2022pliable} etc. 

The study on conflict-free coloring was originally motivated by its application in the frequency assignment problem for cellular networks.
There are two types of nodes in a cellular network: base stations and mobile clients. Base stations serve as the network's backbone and have a fixed position. Mobile clients are connected to the base station. The client and base station are connected by a radio link. Each base station has a fixed frequency to transmit or to enable a radio link for the clients. Clients are continuously scanning the frequencies in search of base stations with a good range. 
When a client is in the range of more than one base station, then mutual interference occurs if the same frequency is assigned to these base stations which will make the links noisy. The primary challenge in the frequency assignment problem in cellular networks is allocating frequencies to base stations in a way that ensures each client is served by some base station whose frequency is distinct from that of the other base stations in its range. The goal is to minimize the number of assigned frequencies because frequencies are expensive and limited; it is preferable to have a scheme that reuses frequencies wherever possible.

We can formulate the frequency assignment problem as a hypergraph coloring problem. Let $\mathcal{H} = (V, \mathcal{E})$ be a hypergraph, where vertices are the given set of base stations and each hyperedge denotes the set of base stations in the range of each client. Thus, we try to find the minimum number of colors required to color the vertices such that every hyperedge has a vertex with a color distinct from the color of all the other vertices in it.  Research on conflict-free coloring was performed under the assumption that any color from a global range of colors can be used and the goal was to minimize the number of colors used. Assume that the wireless network's base stations are further limited to using a subset of the available frequencies. As a result, different base stations may have access to different subsets of frequencies. Studying the list variant of conflict-free coloring is relevant in this context. In this variant, each vertex has a list of colors attached to it. Each vertex receives a color from its list. In the next section, we formally define this notion.

\subsection{List conflict-free coloring}
\label{sec list Cf coloring}
Let $\mathcal{H} = (V, \mathcal{E})$ be a hypergraph, and let $k$ be a positive integer. Let $\mathcal{L} = \{ L_v~:~v \in V\}$, where each $L_v$ is a list of colors. 

\begin{definition}[$k$-assignment]
Let $\mathcal{L} = \{ L_v~:~v \in V\}$ denote an assignment of a list of admissible colors to each vertex of $\mathcal{H}$. We say $\mathcal{L}$ is a \emph{$k$-assignment for $\mathcal{H}$} if $|L_v| = k$, for every $v \in V$. 
\end{definition}

\begin{definition}[$k$-CF$^*$-choosable, $k$-CF-choosable]
Given a list assignment $\mathcal{L} = \{ L_v~:~v \in V\}$, we say that $\mathcal{H}$ admits an \emph{$\mathcal{L}$-CF$^*$-coloring} if there exists a coloring $f: V' \rightarrow \bigcup_{v \in V'}L_v$, where $V' \subseteq V$, such that $f(v) \in L_v$, $\forall v \in V'$, and every hyperedge $E$ in $\mathcal{H}$ contains a point whose color is distinct from that of every other point in $E$. When $V' = V$, we say $\mathcal{H}$ admits an \emph{$\mathcal{L}$-CF-coloring}. We say that $\mathcal{H}$ is \emph{$k$-CF$^*$-choosable} (resp., \emph{$k$-CF-choosable}) if for every $k$-assignment $\mathcal{L}$, $\mathcal{H}$ admits an $\mathcal{L}$-CF\emph{$^*$}-coloring (resp., $\mathcal{L}$-CF-coloring). 
\end{definition}

\begin{definition}[CF$^*$ choice number, CF choice number]
\label{def:CF_list}  
The minimum $k$ for which $\mathcal{H}$ is $k$-CF\emph{$^*$}-choosable (resp., $k$-CF-choosable) is called the \emph{CF$^*$ choice number} (resp., \emph{CF choice number}) of $\mathcal{H}$. This is denoted by $ch^*_{CF}(\mathcal{H})$ (resp., $ch_{CF}(\mathcal{H})$).
\end{definition}

The following theorem due to Cheilaris, Smorodinsky, and  Sulovsk\'{y} \cite{cheilaris2011potential} gives us a relation between the CF chromatic number and CF choice number of a hypergraph $\mathcal{H}$.

\begin{theorem}
\cite{cheilaris2011potential}
\label{cf and l-cf relation}
For any hypergraph $\mathcal{H}$ with $n$ vertices,\\
 (i) $\chi_{CF}(\mathcal{H}) \le ch_{CF}(\mathcal{H}) \le  \chi_{CF}(\mathcal{H}) \cdot \ln n + 1$, and\\ (ii) $\chi^*_{CF}(\mathcal{H}) \le ch^*_{CF}(\mathcal{H}) \le  \chi^*_{CF}(\mathcal{H}) \cdot \ln n_1 + 1$, where $n_1 = \min \{ \mbox{no. of colored } \\ \mbox{vertices  under }  f~:~f \mbox{ is a CF$^*$ coloring of }  \mathcal{H} \mbox{ using only }  \chi^*_{CF}(\mathcal{H}) \mbox{ no. of colors} \}.$   
\end{theorem}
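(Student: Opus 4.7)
The plan is to prove both parts by the standard ``palette-partition'' probabilistic argument that turns any CF-coloring of $\mathcal{H}$ into an $\mathcal{L}$-CF-coloring for an arbitrary list assignment $\mathcal{L}$ of sufficiently large uniform size.

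The lower bounds $\chi_{CF}(\mathcal{H}) \le ch_{CF}(\mathcal{H})$ and $\chi^*_{CF}(\mathcal{H}) \le ch^*_{CF}(\mathcal{H})$ are immediate: choose $\mathcal{L}$ to be the constant $k$-assignment $L_v = \{1, \ldots, k\}$ for every $v$; any $\mathcal{L}$-CF(-$^*$)-coloring is literally a CF(-$^*$)-coloring with $k$ colors, so $k$-CF(-$^*$)-choosability forces $\chi_{CF}(\mathcal{H}) \le k$ (resp.\ $\chi^*_{CF}(\mathcal{H}) \le k$).

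For the upper bound in (i), I would fix a CF-coloring $\phi : V \to [\chi]$ with $\chi := \chi_{CF}(\mathcal{H})$, take an arbitrary $k$-assignment $\mathcal{L}$ with $k = \lfloor \chi \ln n \rfloor + 1$, and randomly partition the palette $\mathcal{P}_\mathcal{L}$ into classes $C_1, \ldots, C_\chi$ by assigning each color independently and uniformly to one of the $\chi$ classes. For every vertex $v$, define $S_v := L_v \cap C_{\phi(v)}$. Since $|L_v| = k$ and each color lands in $C_{\phi(v)}$ independently with probability $1/\chi$,
\[
\Pr[S_v = \emptyset] = \left(1 - \tfrac{1}{\chi}\right)^{k} \le e^{-k/\chi} < \tfrac{1}{n}.
\]
A union bound over the $n$ vertices shows that with positive probability every $S_v$ is non-empty, so fix such a partition and color each $v$ with an arbitrary color of $S_v$. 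For any hyperedge $E$, let $v_E \in E$ be the unique witness under $\phi$; then $v_E$ receives a color from $C_{\phi(v_E)}$, while every other $u \in E$ receives a color from $C_{\phi(u)}$ with $\phi(u) \ne \phi(v_E)$, and the classes $C_1, \ldots, C_\chi$ are pairwise disjoint by construction. Hence $v_E$'s color is distinct from that of every other vertex in $E$, and the coloring is $\mathcal{L}$-CF.

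Part (ii) is the same argument applied to a partial CF-coloring $\phi : V' \to [\chi^*]$ achieving $\chi^* := \chi^*_{CF}(\mathcal{H})$ with $|V'| = n_1$ minimum; the random partition is used only to recolor the vertices of $V'$, and the union bound now ranges over the $n_1$ vertices of $V'$, giving $k > \chi^* \ln n_1$, i.e.\ $k = \lfloor \chi^* \ln n_1 \rfloor + 1$ suffices. The only step that is not pure bookkeeping is checking that the CF property survives the random recoloring, which is exactly the disjointness observation used above; this is the one place where care is needed, but it is not really an obstacle.
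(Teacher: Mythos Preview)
The paper does not actually prove this theorem; it is quoted from \cite{cheilaris2011potential} without proof, so there is no in-paper argument to compare against. Your proposal is correct and is precisely the standard ``random palette-partition'' argument that the cited paper uses (and that the present paper itself reuses in Lemma~\ref{partition}): partition $\mathcal{P}_{\mathcal{L}}$ uniformly at random into $\chi$ blocks, show by a union bound over the (colored) vertices that with positive probability every $L_v$ meets block $C_{\phi(v)}$, and then observe that the disjointness of the blocks preserves the uniqueness witness in every hyperedge. Your bookkeeping (the inequality $\lfloor \chi\ln n\rfloor+1>\chi\ln n$ giving $e^{-k/\chi}<1/n$, and the restriction of the union bound to the $n_1$ colored vertices in part (ii)) is fine.
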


Similar to CFON and CFCN coloring, we define their list analogues below. Before we go into their definitions, let us look at the definition of the choice number of a graph $G$, denoted by $ch(G)$. Given $\mathcal{L} = \{ L_v~:~v \in V(G)\}$, we say $G$ admits \emph{$\mathcal{L}$-coloring}, if there exists a proper coloring $f: V(G) \rightarrow \bigcup_{v \in V(G)} L_v$ such that $f(v) \in L_v$, $\forall v \in V(G)$. We say $G$ is \emph{$k$-choosable} if $G$ admits an $\mathcal{L}$-coloring, for all $k$-assignments $\mathcal{L}$. The minimum $k$ for which $G$ is $k$-choosable is called the \emph{choice number} of $G$.

\begin{definition}[CFON$^*$ choice number, CFON choice number]
\label{defn_list_CFON}
Given a list assignment $\mathcal{L} = \{ L_v~:~v \in V(G)\}$ for a graph $G$, we say $G$ is \emph{$\mathcal{L}$-CFON$^*$-colorable}  if there exists a CFON\emph{$^*$} coloring $f: V' \rightarrow \bigcup_{v \in V'}L_v$, where $V' \subseteq V(G)$, such that $f(v) \in L_v$, $\forall v \in V'$. When $V' = V$, we say $G$ is \emph{$\mathcal{L}$-CFON-colorable}. We say that $G$ is $k$-CFON$^*$ (resp., $k$-CFON)-choosable if for every $k$-assignment $\mathcal{L}$, $G$ is $\mathcal{L}$-CFON$^*$ (resp., $\mathcal{L}$-CFON)-colorable. The minimum $k$ for which $G$ is $k$-CFON$^*$ (resp., $k$-CFON)-choosable is called the \emph{CFON$^*$}(resp., \emph{CFON})\emph{ choice number} of $G$. This is denoted by $ch^*_{ON}(G)$ (resp., $ch_{ON}(G)$). 
\end{definition}

Analogously, one can define the CFCN$^*$ choice number and CFCN choice number of a graph $G$ which will be denoted by $ch^*_{CN}(G)$ and  $ch_{CN}(G)$, respectively. 

To the best of our knowledge, the only paper that studies conflict-free choosability is due to Cheilaris, Smorodinsky, and  Sulovsk\'{y} in \cite{cheilaris2011potential}.
Using the potential method, they show that (see Theorem 2.5 \cite{cheilaris2011potential}) the CF choice number of a hereditarily $k$-colorable hypergraph on $n$ vertices is $O(k \ln n)$. Further, they show that the geometric hypergraphs (i) induced by a set of $n$ planar discs, and (ii) induced by $n$ points with respect to planar regions like discs, halfplanes, or intervals can be hereditarily colored using a constant number of colors. Thus, all these geometric hypergraphs have their CF choice number equal to $O(\ln n)$. As for a general hypergraph $\mathcal{H}$ with $m$ edges having a maximum degree of $\Delta$, they show:
\begin{eqnarray}
ch_{CF}(\mathcal{H}) \leq 1/2 + \sqrt{2m + 1/4} \label{Ineq_hypergraph_no_of_edges} \\
ch_{CF}(\mathcal{H}) \leq \Delta + 1 \label{Ineq_hypergraph_max_degree}
\end{eqnarray}  
Further in \cite{cheilaris2011potential}, the authors show that $ch_{CF}(\mathcal{H}_n) = \lfloor \ln_2 n \rfloor + 1$, where $\mathcal{H}_n$ is a `discrete interval hypergraph' on $n$ vertices.  For a graph $G$ on $n$ vertices, let $H_{G}^{path} := (V(G), \{S ~| ~S \text{ is the vertex set of a path in } G\})$. Theorem 4.1 in \cite{cheilaris2011potential} says that $ch_{CF} (H_{G}^{path}) \\= O(\sqrt{n})$.


\subsection{Some quick observations}
\label{sec quick obs}

Since in a classical list coloring of a graph $G$, each vertex sees its own color exactly once in its closed neighborhood, we have the following.

\begin{observation}
\label{obv_list_CFCN_vs_classical_list}
For any graph $G$,
$ch_{CN}(G) \le ch(G)$.
\end{observation}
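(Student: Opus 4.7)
The plan is to show that any proper list coloring of $G$ is automatically a CFCN coloring of $G$ with respect to the same list assignment, which immediately yields $ch_{CN}(G) \le ch(G)$.

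First, I would set $k := ch(G)$ and fix an arbitrary $k$-assignment $\mathcal{L} = \{L_v : v \in V(G)\}$. By the definition of choice number recalled just before Definition~\ref{defn_list_CFON}, $G$ admits an $\mathcal{L}$-coloring, i.e., a proper coloring $f: V(G) \to \bigcup_{v \in V(G)} L_v$ with $f(v) \in L_v$ for every $v \in V(G)$ and $f(u) \neq f(v)$ whenever $uv \in E(G)$.

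Next, I would verify that the same $f$ serves as an $\mathcal{L}$-CFCN-coloring of $G$. Fix any $v \in V(G)$; its closed neighborhood is $N_G[v] = \{v\} \cup N_G(v)$. Since $f$ is proper, every neighbor $u \in N_G(v)$ satisfies $f(u) \neq f(v)$, so $v$ itself is the unique vertex in $N_G[v]$ colored $f(v)$. Thus every vertex sees a uniquely colored vertex (namely itself) in its closed neighborhood, which is exactly the CFCN condition under the list assignment $\mathcal{L}$.

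Since $\mathcal{L}$ was an arbitrary $k$-assignment, $G$ is $k$-CFCN-choosable, and therefore $ch_{CN}(G) \le k = ch(G)$. There is no real obstacle here: the entire content is the observation that properness already forces the center of any closed neighborhood to be its own unique color, so the inequality is essentially immediate from the definitions.
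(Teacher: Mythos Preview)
Your proof is correct and matches the paper's own justification exactly: the paper simply remarks that in a classical list coloring each vertex sees its own color exactly once in its closed neighborhood, which is precisely the argument you spell out. There is nothing to add.
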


Since CFON (or CFCN) coloring of a graph $G$ is a CF coloring of the open (or closed) neighborhood hypergraphs of $G$, we can extend Theorem \ref{cf and l-cf relation} to obtain the following proposition.

\begin{proposition}[\cite{cheilaris2011potential}]
\label{ON and l-ON relation}
    Let $G$ be a graph on $n$ vertices. Then, \\
    (i) $\chi_{ON}(G) \le ch_{ON}(G) \le  \chi_{ON}(G) \cdot \ln n + 1$, \\
    (ii) $\chi_{CN}(G) \le ch_{CN}(G) \le  \chi_{CN}(G) \cdot \ln n + 1$, \\
    (iii) $\chi_{ON}^*(G) \le ch_{ON}^*(G) \le  \chi_{ON}^*(G) \cdot \ln n_1 + 1$, where $n_1$ is the minimum number of colored vertices over all possible CFON$^*$ colorings of $G$ that use only $\chi_{ON}^*(G)$ colors. \\
    (iv) $\chi_{CN}^*(G) \le ch_{CN}^*(G) \le  \chi_{CN}^*(G) \cdot \ln n_1 + 1$, where $n_1$ is the minimum number of colored vertices over all possible CFCN$^*$ colorings of $G$ that use only $\chi_{CN}^*(G)$ colors. 
\end{proposition}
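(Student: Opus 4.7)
The plan is to reduce Proposition~\ref{ON and l-ON relation} directly to Theorem~\ref{cf and l-cf relation} by viewing a CFON or CFCN coloring of a graph as a CF coloring of an appropriate hypergraph, and then simply quoting the hypergraph result. Concretely, for a graph $G$ on $n$ vertices I would define the \emph{open neighborhood hypergraph} of $G$ as $\mathcal{N}_{O}(G) := (V(G), \{N_G(v) : v \in V(G)\})$ and the \emph{closed neighborhood hypergraph} as $\mathcal{N}_{C}(G) := (V(G), \{N_G[v] : v \in V(G)\})$. Both hypergraphs have exactly $n$ vertices.

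Next I would observe the one-to-one correspondence between the colorings under consideration. By the definition of CFON coloring recalled in Section~\ref{subsec_CF_coloring}, a map $f : V(G) \to \{1, \ldots, k\}$ is a CFON coloring of $G$ iff every hyperedge of $\mathcal{N}_{O}(G)$ contains a point whose color is distinct from that of every other point in the hyperedge, i.e.\ iff $f$ is a CF coloring of $\mathcal{N}_{O}(G)$. The list version transfers in the same way: given $\mathcal{L} = \{L_v : v \in V(G)\}$, the graph $G$ is $\mathcal{L}$-CFON-colorable exactly when $\mathcal{N}_{O}(G)$ admits an $\mathcal{L}$-CF-coloring, because both conditions require each vertex to receive a color from $L_v$ and each neighborhood-hyperedge to be conflict-free. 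Therefore $\chi_{ON}(G) = \chi_{CF}(\mathcal{N}_{O}(G))$ and $ch_{ON}(G) = ch_{CF}(\mathcal{N}_{O}(G))$. Applying Theorem~\ref{cf and l-cf relation}(i) to $\mathcal{N}_{O}(G)$ (which has $n$ vertices) immediately yields part (i) of the proposition. Part (ii) follows by the same argument with $\mathcal{N}_{C}(G)$ in place of $\mathcal{N}_{O}(G)$.

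For parts (iii) and (iv), the same reduction works once we are careful about the parameter playing the role of $n$ in the partial setting. A CFON$^*$ coloring of $G$ colors only some subset $V' \subseteq V(G)$, and under the identifications above this is precisely a partial CF coloring of $\mathcal{N}_{O}(G)$ that colors exactly $|V'|$ vertices. Consequently $\chi^*_{ON}(G) = \chi^*_{CF}(\mathcal{N}_{O}(G))$, and the quantity $n_1$ from Theorem~\ref{cf and l-cf relation}(ii), namely the minimum number of colored vertices over all partial CF colorings of $\mathcal{N}_{O}(G)$ using $\chi^*_{CF}(\mathcal{N}_{O}(G))$ colors, is exactly the minimum number of colored vertices over all CFON$^*$ colorings of $G$ using $\chi^*_{ON}(G)$ colors. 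Plugging this $n_1$ into Theorem~\ref{cf and l-cf relation}(ii) proves (iii); the identical argument on $\mathcal{N}_{C}(G)$ proves (iv).

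The proof thus has no real obstacle: it is a black-box application of Theorem~\ref{cf and l-cf relation}. The only point needing mild care is the bookkeeping for $n_1$ in the partial case, where one must check that the parameter transfers unchanged under the graph-to-hypergraph correspondence; this is immediate because $V(\mathcal{N}_{O}(G)) = V(\mathcal{N}_{C}(G)) = V(G)$ and the colored subset $V'$ is preserved verbatim by the reduction.
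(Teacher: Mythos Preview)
Your proposal is correct and is exactly the approach the paper takes: the paper does not give a separate proof of this proposition but simply notes that CFON/CFCN colorings of $G$ are CF colorings of the open/closed neighborhood hypergraphs of $G$, so the result is a direct extension of Theorem~\ref{cf and l-cf relation}. Your write-up just makes this reduction explicit, including the bookkeeping for $n_1$ in the partial case, which is precisely what the paper leaves to the reader.
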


\begin{example}
\label{ex:star_graph}    
Let $k \geq 2$ be an  integer. Then, the complete bipartite graph $K_{1,k}$ satisfies (i) $\chi_{ON}(K_{1,k}) = ch_{ON}(K_{1,k}) = 2$, (ii) $\chi_{CN}(K_{1,k}) = ch_{CN}(K_{1,k}) = 2$, (iii) $\chi_{ON}^*(K_{1,k}) = ch_{ON}^*(K_{1,k}) = 1$, and (iv) $\chi_{CN}^*(K_{1,k}) = ch_{CN}^*(K_{1,k}) = 1$. 
\end{example}

Examples \ref{ex:partial_vs_full} and \ref{ex:partial_list_vs_partial} below imply that there are graphs $G$ on $n$ vertices for which $ch_{ON}(G) = \Omega(\chi_{ON}(G) \cdot \frac{\ln n}{\ln \ln n})$ and $ch^*_{ON}(G) = \Omega(\chi^*_{ON}(G) \cdot \frac{\ln n}{\ln \ln n})$. This makes the inequalities in statements (i) and (iii) of Proposition \ref{ON and l-ON relation} almost tight. The tightness of the other bounds mentioned in the above proposition is not known.  

Unlike in classical CF coloring (where $\chi_{CF}(\mathcal{H}) \leq \chi_{CF}^*(\mathcal{H}) + 1$), there are hypergraphs $\mathcal{H}$ for which $ch_{CF}(\mathcal{H})$ is arbitrarily larger than $ch^*_{CF}(\mathcal{H})$. See Example \ref{ex:partial_vs_full}. Example \ref{ex:partial_list_vs_partial} gives graphs whose CFON$^*$ chromatic number and CFON$^*$ choice number are far apart. 

\begin{example} 
\label{ex:partial_vs_full} 
Let $d>2$. Let $K_{d, d^d}$ denote the complete bipartite graph with bipartition $\{A, B\}$ having $d$ vertices in Part $A$ and $d^d$ vertices in Part $B$. Let $K_{d, d^d}^{1/2}$ denote the graph obtained by subdividing  every edge of $K_{d, d^d}$ exactly once. Then, (i) $ch^{*}_{ON}(K_{d, d^d}^{1/2}) \leq 2$, (ii) $\chi_{ON}(K_{d, d^d}^{1/2}) \leq 3$, and (iii) $ch_{ON}(K_{d, d^d}^{1/2}) \geq d+1$.
\end{example}

\begin{proof}
Let $A = \{a_1, \ldots , a_d\}$ and $B = \{b_1, \ldots , b_{d^d}\}$. We shall use $v_{i,j}$ to denote the vertex obtained by subdividing the edge $a_ib_j$.
\\ 
(i).  Let $\mathcal{L} = \{L_v~:~v \in V(K_{d, d^d}^{1/2})\}$ be a 2-assignment for $K_{d, d^d}^{1/2}$. For $ 1 \leq i \leq d$, each vertex $v_{i,i}$ is arbitrarily assigned a color from its list. For $d+1 \leq j \leq d^d$, each vertex $v_{d, j}$ is assigned a color from its list that is not equal to the color assigned to $v_{d,d}$. Finally, every vertex in Part $A$ is assigned an arbitrary color from its list. It is easy to see 
that, the above coloring is a valid $\mathcal{L}$-CFON$^*$-coloring of $K_{d, d^d}^{1/2}$. Thus, $ch^*_{ON}(K_{d, d^d}^{1/2}) \leq 2$.
\\ 
(ii). Each vertex in Part $A$ is given Color 1, and each vertex in Part $B$ is given Color 2. For $ 1 \leq i, j \leq d$, each vertex $v_{i,i}$ is assigned Color 1 and each $v_{i,j}$, $i \neq j$, is assigned Color 2. For $d+1 \leq j \leq d^d$, each vertex $v_{d, j}$ is assigned Color 3. Finally, for $1 \leq i \leq d-1$ and $d+1 \leq j \leq d^d$, assign Color 2 to $v_{i,j}$. It is left to the reader to verify that this is indeed a valid CFON coloring. 
\\ 
(iii). Now consider a list CFON coloring of $K_{d, 
d^d}^{1/2}$ that colors every vertex. Such a coloring cannot have a vertex $a_i \in A$ and $b_j \in B$ receive the same color as that leaves the vertex $v_{i,j}$ seeing no unique color in its open neighborhood. Thus, 
$ch_{ON}(K_{d, d^d}^{1/2}) \geq ch(K_{d, d^d})$. It is known that $ch(K_{d, d^d}) \geq d+1$ (see \cite{gravier1996hajos}).  
\end{proof}

\begin{remark}
Let $n$ denote the number of vertices of $K_{d, d^d}^{1/2}$. Then, $n= d+d^d+d^{d+1}$. From Example \ref{ex:partial_vs_full}, $ch_{ON}(K_{d, d^d}^{1/2}) \geq d+1 = \Omega(\frac{\ln n}{\ln\ln n})$ and $ch^*_{ON}(K_{d, d^d}^{1/2}) \leq 2$. Thus, $ch_{ON}(K_{d, d^d}^{1/2})  = \Omega(ch^*_{ON}(K_{d, d^d}^{1/2}) + \frac{\ln n}{\ln\ln n})$. We prove in Theorem \ref{thm:partial_vs_full} that for any hypergraph $\mathcal{H}$ on $n$ vertices, $ch_{CF}(\mathcal{H}) =  O(ch_{CF}^*(\mathcal{H}) + \ln n)$.    
\end{remark}

The following proposition gives graphs whose CFON$^*$ chromatic number and CFON$^*$ choice number are far apart.

\begin{example}
\label{ex:partial_list_vs_partial}    
Let $d>2$. Let $P_{d, d^d}^{1/2}$ denote the graph obtained by adding exactly one pendant vertex to every vertex in $K_{d, d^d}^{1/2}$ whose degree is greater than $2$. Then,  (i) $\chi_{ON}^*(P_{d, d^d}^{1/2}) \leq 3$, and (ii) $ch^*_{ON}(P_{d, d^d}^{1/2}) \geq d+1 = \Omega(\frac{\ln n}{\ln \ln n})$, where $n$ denotes the number of vertices in $P_{d, d^d}^{1/2}$. 
\end{example}

\begin{proof}
We define $A,B, a_i, b_j,$ and $v_{i,j}$ as in Example \ref{ex:partial_vs_full}. \\
(i) Each vertex in Part $A$ is assigned Color 1, each vertex in Part $B$ is assigned Color 2, and finally every pendant vertex is assigned Color 3.\\
(ii) For the pendant vertices to see a unique color in their open neighborhood, it is essential that every vertex in $A$ and $B$ are colored. The arguments used to prove (iii) in Example \ref{ex:partial_vs_full} follow.  
\end{proof}

The following proposition connects the CFON and  CFCN choice numbers of a graph.

\begin{proposition}
\label{l-cn and l-on relation}
For any graph $G$ on $n$ vertices,\\
(i) $ch_{CN}(G) \le 2 ch_{ON}(G) \cdot \ln n + 1$, \\
(ii) $ch_{CN}^*(G) \le 2 ch_{ON}^*(G) \cdot \ln n_1 + 1$, where $n_1$ is the minimum number of colored vertices over all possible CFCN$^*$ colorings of $G$ that use only $\chi_{CN}^*(G)$ colors.
\end{proposition}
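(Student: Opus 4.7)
The proposition looks like it should follow immediately by chaining together the inequalities we have already stated in the excerpt, with no new combinatorial argument required. Here is the plan.

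For part (i), the plan is to pass through the non-list CFCN chromatic number as an intermediate quantity. By Proposition \ref{ON and l-ON relation}(ii) applied to $G$, we have $ch_{CN}(G) \le \chi_{CN}(G)\cdot \ln n + 1$. Now I would invoke Proposition \ref{prop:CFCNON}(i) to replace $\chi_{CN}(G)$ by $2\chi_{ON}(G)$, and finally use the trivial bound $\chi_{ON}(G) \le ch_{ON}(G)$ (the first inequality of Proposition \ref{ON and l-ON relation}(i)) to conclude
\[
ch_{CN}(G) \;\le\; \chi_{CN}(G)\cdot \ln n + 1 \;\le\; 2\chi_{ON}(G)\cdot \ln n + 1 \;\le\; 2\,ch_{ON}(G)\cdot \ln n + 1.
\]

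For part (ii), I would repeat the same chain but using the starred versions throughout. Proposition \ref{ON and l-ON relation}(iv) gives $ch_{CN}^*(G) \le \chi_{CN}^*(G)\cdot \ln n_1 + 1$, where $n_1$ is exactly the quantity defined in the statement of the proposition (the minimum number of colored vertices across all CFCN$^*$ colorings of $G$ using $\chi_{CN}^*(G)$ colors). Then Proposition \ref{prop:CFCNON}(ii) gives $\chi_{CN}^*(G) \le 2\chi_{ON}^*(G)$, and Proposition \ref{ON and l-ON relation}(iii) gives $\chi_{ON}^*(G) \le ch_{ON}^*(G)$; combining the three inequalities yields the claim.

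The only real thing to check carefully is that the $n_1$ appearing on each side of part (ii) matches: the $n_1$ in our target statement is defined in terms of $\chi_{CN}^*(G)$-colorings, and that is precisely the $n_1$ that feeds into Proposition \ref{ON and l-ON relation}(iv), so no reindexing is needed. I do not anticipate any real obstacle here; the content of the proposition is already in the lemmas we are allowed to cite, and the proof is a short chain of substitutions.
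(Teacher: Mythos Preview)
Your proposal is correct and matches the paper's proof essentially verbatim: the paper also chains $ch_{CN}(G) \le \chi_{CN}(G)\cdot \ln n + 1 \le 2\chi_{ON}(G)\cdot \ln n + 1 \le 2\,ch_{ON}(G)\cdot \ln n + 1$, citing Propositions \ref{ON and l-ON relation} and \ref{prop:CFCNON} for the respective steps, and declares (ii) similar.
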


\begin{proof}
We shall prove only (i) as the proof of (ii) is similar. 
We have, $ch_{CN}(G) \le \chi_{CN}(G) \cdot \ln n + 1 \le 2 \chi_{ON}(G) \cdot \ln n + 1 \le 2 ch_{ON}(G) \cdot \ln n + 1$, where the first and last inequalities follow from Proposition \ref{ON and l-ON relation} and the second inequality follows from Proposition \ref{prop:CFCNON}.
\end{proof} 

A path on two vertices is a graph $G$ with $2 = ch_{CN}(G) > ch_{ON}(G) = 1$. We do not know of any graph $G$ for which $ch_{CN}(G)/ch_{ON}(G)$ is arbitrarily large. Thus, it is possible that the bound given in Proposition \ref{l-cn and l-on relation} is far from tight.

In the table below, we present a collection of known observations, along with some new ones, in connection with various conflict-free coloring and conflict-free choosability parameters. We also provide tight examples that illustrate the relationships between these parameters. Here, $\mathcal{H}$ denotes a hypergraph on $n$ vertices, and $G$ denotes a graph on $n$ vertices.

\begin{table}[H]
\centering
\begin{tabular}{|p{7cm}|p{7cm}|}
\hline 
\textbf{Relation} & \textbf{Asymptotic Tightness} \\
\hline \hline
 $ \chi_{\mathrm{CF}}(\mathcal{H}) \le ch_\mathrm{CF}(\mathcal{H}) \le   \chi_\mathrm{CF}(\mathcal{H}) \cdot \ln n + 1$ [Known] & Up to multiplicative factor of $\ln \ln n$ [Expl. ~\ref{ex:partial_vs_full}]     
\\ 
\hline 
 $ \chi_\mathrm{CF}^*(\mathcal{H}) \le ch_\mathrm{CF}^*(\mathcal{H}) \le   \chi_\mathrm{CF}^*(\mathcal{H}) \cdot \ln n + 1$ [Known]  & Up to multiplicative factor of $\ln \ln n$ [Expl. ~\ref{ex:partial_list_vs_partial}]       
\\ 
\hline \hline 

\textcolor{gray}{$\chi_\mathrm{CN}(G) \leq 2\chi_\mathrm{ON}(G)$ [Known]} & \textcolor{gray}{Tight  [Known]}       
\\ 
\hline
\textcolor{gray}{$ch_\mathrm{CN}(G) \le 2 ch_\mathrm{ON}(G) \cdot \ln n + 1$ [Prop. ~\ref{l-cn and l-on relation}]}  & \textcolor{gray}{Not known}    
\\ 
\hline \hline
 $ \chi_\mathrm{CF}(\mathcal{H}) \leq \chi^*_\mathrm{CF}(\mathcal{H}) + 1$ [Known] &  Tight  [Known] 
\\ 
\hline
 $ch_\mathrm{CF}(\mathcal{H}) =  O(ch_\mathrm{CF}^*(\mathcal{H}) + \ln n)$ [Thm. ~\ref{thm:partial_vs_full}]   & Up to additive factor of ($\ln n - \frac{\ln n}{\ln \ln n})$  [Expl. ~\ref{ex:partial_vs_full}]        
\\ 
\hline
\end{tabular}
\begin{center}
\caption{A summary of the quick observations}
\end{center}
\label{table_quick_obvs}
\end{table}

\subsection{Preliminaries}
\label{subsec_prelim}
We study simple, finite, and undirected graphs throughout this paper. When discussing open neighborhood coloring, we assume the graph under consideration has no isolated vertices. 
Let \emph{palette} of $\mathcal{L}$, denoted $\mathcal{P}_{\mathcal{L}}$, be defined as $\mathcal{P}_{\mathcal{L}} := \bigcup_{v \in V(G)}L_v$.
Given an $S \subseteq V(G)$, we shall use (i) $G[S]$ to denote the subgraph of $G$ induced by the vertices in $S$, and (ii) $G-S$ to denote the subgraph induced by the vertices in $V(G)\setminus S$. 
The \emph{degree} of an element $v \in V$ in a given hypergraph $\mathcal{H} = (V, \mathcal{E})$, denoted by $d_{\mathcal{H}}(v)$, is the number of hyperedges that $v$ is present in. The \emph{maximum degree} of $\mathcal{H}$ is defined as $\max \{d_\mathcal{H}(v) :v \in V\}$. 
A \emph{planar graph } is a graph that can be drawn on a plane in such a way that no two distinct edges intersect, except at a shared vertex.

\subsection{Our results}
\label{subsec_our_methods}
In this paper, we prove upper bounds for CF$^*$ choice numbers of general hypergraphs and CFON$^*$/CFCN$^*$ choice numbers of general graphs. One can use Theorem \ref{thm:partial_vs_full} to extend these results to CF choice numbers and CFON/CFCN choice numbers. 

Our first result comes from observing that the proof of the general upper bound for the conflict-free chromatic number of a hypergraph due to  Pach and Tardos \cite{pach2009conflict} can be easily extended to partial list CF coloring. We thus show in Section \ref{sec_upper bounds} that for a hypergraph $\mathcal{H},~ ch^*_{CF}(\mathcal{H}) = (t\Gamma^{1/t}\ln \Gamma)$, where every hyperedge in $\mathcal{H}$ is of size at least ${2t - 1}$ and every hyperedge overlaps with at most $\Gamma$ other hyperedges. Let $G$ be a graph with maximum degree $\Delta$ and minimum degree $\Omega(\ln \Delta)$. Applying the above result on the open/closed neighborhood hypergraphs of $G$ with $\Gamma = \Delta^2$ and $t = \frac{\ln\Delta + 1}{2}$, {we show in Section \ref{sec_upper bounds} that (i) ${ch^*_{ON}(G) = O(\ln^2 \Delta)}$, and (ii)  ${ch^*_{CN}(G) = O(\ln^2 \Delta)}$}.

For any graph $G$ with maximum degree $\Delta$, Pach and Tardos \cite{pach2009conflict} in 2009 showed that for any constant $\epsilon > 0$, $\chi_{CN}^*(G) = O(\ln^{2 + \epsilon }\Delta)$.
Later, Glebov, Szabó, and Tardos \cite{glebov2014conflict} in 2014 showed the existence of a graph $G$ with $\chi_{CN}^*(G) = \Omega(\ln^2\Delta)$. Bhyravarapu, Kalyanasundaram, and Mathew \cite{bhyravarapu2021short} in 2021 bridged the gap between the upper and the lower bound. They showed that $\chi_{CN}^*(G) = O(\ln^2 \Delta)$.
We generalize the upper bound of \cite{bhyravarapu2021short} to list CFCN${^*}$ coloring by proving ${ch^*_{CN}(G) = O(\ln^2 \Delta)}$ in Section \ref{sec_upper bounds}. The proof of $\chi_{CN}^*(G) = O(\ln^2 \Delta)$ given in \cite{bhyravarapu2021short} uses the idea of maximal distance-$3$ sets and cannot be adapted to prove the list version. Our proof crucially uses the extension (mentioned in the paragraph above) of the theorem due to Pach and Tardos.

We study the computational complexity of the CFON/CFON$^*$/CFCN choosability problems in Section \ref{Hardness Results}. Let $k$ be a constant integer. We show that the \textsc{$k$-CFON-choosability  problem}, the \textsc{$k$-CFON$^*$-choosability  problem}, and the \textsc{$g^G_{\{2,k\}}$-CFCN-choosability  problem} (defined in Section \ref{subsec:hardness_cfcn}) are $\Pi_2^{P}$-complete on (i) bipartite graphs, for any $k \geq 3$, (ii) planar triangle-free graphs, when $k=3$, and (iii) planar graphs, when $k=4$.

\section{Auxiliary results}
\label{sec:auxiliary_lemmas}

In this section, we state some known results (and a new result, Theorem \ref{thm:partial_vs_full}) that we use later. 

\begin{theorem} \cite{cheilaris2011potential}
\label{max degree + 1}
For any hypergraph $\mathcal{H}$ with maximum degree  $\Delta$, $ch_{CF}(\mathcal{H}) \leq \Delta + 1$. 
\end{theorem}

Below we state the Local Lemma due to Erd\H{o}s and Lov{\'a}sz \cite{lovaszlocallemma}, and a variant of Chernoff Bound that is used in the proofs of Theorem \ref{thm:improve_Pach} and Lemma \ref{partition}, respectively.

\begin{lemma}[\emph{The Local Lemma}, \cite{lovaszlocallemma}] 
\label{lem:local} 
Let $A_1, \ldots, A_n$ be events in an arbitrary probability space. Suppose that each event $A_i$ is mutually independent of a set of all the other events $A_j$ but at most $d$, and that $Pr[A_i] \leq p$ for all $i \in [n]$. If 
$$ep(d+1) \leq 1\;\;\; or \;\;\; 4pd \leq 1\;,$$ 
then $Pr[\land _{i=1}^n \overline{A_i}] > 0$.  
\end{lemma}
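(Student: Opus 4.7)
The plan is to prove the more general asymmetric form of the lemma first, and then derive the two stated symmetric conditions as corollaries. The asymmetric form states: if one can choose reals $x_1, \ldots, x_n \in [0,1)$ such that for every $i$, $\Pr[A_i] \le x_i \prod_{j \in N(i)}(1 - x_j)$, where $N(i)$ indexes the events on which $A_i$ is not mutually independent, then $\Pr\bigl[\bigwedge_i \overline{A_i}\bigr] \ge \prod_{i=1}^n (1 - x_i) > 0$.

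The core step would be to establish by induction on $|S|$ the conditional estimate
\[
\Pr\bigl[A_i \,\bigm|\, \bigwedge_{j \in S}\overline{A_j}\bigr] \le x_i, \qquad \text{for every } i \notin S \subseteq \{1, \ldots, n\}.
\]
The base case $S = \emptyset$ is immediate from the hypothesis. For the inductive step, partition $S = S_1 \cup S_2$ with $S_1 = S \cap N(i)$ and $S_2 = S \setminus N(i)$, and use the identity
\[
\Pr\bigl[A_i \,\bigm|\, \bigwedge_{j \in S}\overline{A_j}\bigr] \;=\; \frac{\Pr\bigl[A_i \cap \bigwedge_{j \in S_1}\overline{A_j} \,\bigm|\, \bigwedge_{j \in S_2}\overline{A_j}\bigr]}{\Pr\bigl[\bigwedge_{j \in S_1}\overline{A_j} \,\bigm|\, \bigwedge_{j \in S_2}\overline{A_j}\bigr]}.
\]
Since $A_i$ is mutually independent of the events indexed by $S_2$, the numerator is at most $\Pr[A_i] \le x_i \prod_{j \in N(i)}(1 - x_j)$. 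For the denominator, enumerate $S_1 = \{j_1, \ldots, j_r\}$ and apply the chain rule; each factor $\Pr\bigl[\overline{A_{j_\ell}} \bigm| \bigwedge \overline{\cdot}\bigr]$ is bounded below by $1 - x_{j_\ell}$ using the inductive hypothesis applied to smaller conditioning sets. Dividing and cancelling the factors indexed by $S_1 \subseteq N(i)$ gives the desired bound $x_i$. The main conclusion then follows from a final chain-rule expansion: $\Pr\bigl[\bigwedge_i \overline{A_i}\bigr] = \prod_{i=1}^n \bigl(1 - \Pr\bigl[A_i \bigm| \bigwedge_{j < i}\overline{A_j}\bigr]\bigr) \ge \prod_i (1-x_i) > 0$.

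To obtain the symmetric criterion $ep(d+1) \le 1$, I would set $x_i = 1/(d+1)$ uniformly; since $|N(i)| \le d$, the verification reduces to $p \le \frac{1}{d+1}\bigl(1 - \frac{1}{d+1}\bigr)^d$, and this follows from the standard inequality $\bigl(1 - \frac{1}{d+1}\bigr)^d \ge 1/e$. For the criterion $4pd \le 1$, I would instead take $x_i = 1/(2d)$ and invoke Bernoulli's inequality to get $\bigl(1 - \frac{1}{2d}\bigr)^d \ge 1 - d \cdot \frac{1}{2d} = \frac{1}{2}$, which yields $x_i(1-x_i)^d \ge \frac{1}{4d} \ge p$.

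The main obstacle is the inductive step, specifically the bookkeeping when partitioning $S$ with respect to $N(i)$ so that mutual independence can be applied in the numerator while the inductive hypothesis is applied (one event at a time, via the chain rule) in the denominator. Once that partition argument is organised cleanly, the passage to the two symmetric forms is a routine computation using the bounds $(1-1/(d+1))^d \ge 1/e$ and $(1-1/(2d))^d \ge 1/2$ respectively.
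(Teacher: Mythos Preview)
The paper does not give a proof of this lemma at all: it is merely stated and attributed to Erd\H{o}s and Lov\'asz~\cite{lovaszlocallemma} as a known tool, so there is no ``paper's own proof'' to compare against. Your proposal is the standard proof of the asymmetric Local Lemma by induction on the size of the conditioning set, followed by the usual specialisations $x_i = 1/(d+1)$ (using $(1-1/(d+1))^d \ge 1/e$) and $x_i = 1/(2d)$ (using Bernoulli to get $(1-1/(2d))^d \ge 1/2$); the argument is correct as outlined, modulo the trivial edge case $d=0$ where $1/(2d)$ is undefined but the events are fully independent and the conclusion is immediate.
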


\begin{theorem}
[Chernoff Bound, Corollary 4.6 in \cite{mitzenmacher}]
\label{thm_Chernoff}
Let $X_1, \ldots , X_n$ be independent Poisson trials such that $Pr[X_i =1] = p_i$. Let $X = \sum_{i=1}^n X_i$ and $\mu = E[X]$. For $0 < \delta < 1$, 
$Pr[|X-\mu| \geq \delta \mu] \leq 2e^{-\mu \delta^2/3}$. 
\end{theorem}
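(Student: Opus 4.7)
The plan is to use the standard exponential-moment (Chernoff) method: apply Markov's inequality to $e^{tX}$ for a carefully chosen tilt parameter $t$, then optimize.

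First I would bound the moment generating function. Since the $X_i$ are independent indicators with $E[X_i]=p_i$, for any $t>0$ we have $E[e^{tX_i}] = 1 + p_i(e^t-1)$, and using $1+x \le e^x$ coordinatewise,
\[
E[e^{tX}] = \prod_{i=1}^n\bigl(1 + p_i(e^t-1)\bigr) \le \exp\bigl((e^t-1)\mu\bigr).
\]
For the upper tail, Markov's inequality applied to $e^{tX}$ gives
\[
\Pr[X \ge (1+\delta)\mu] \le \frac{E[e^{tX}]}{e^{t(1+\delta)\mu}} \le \exp\bigl(\mu\bigl(e^t - 1 - t(1+\delta)\bigr)\bigr).
\]
Minimizing in $t$ with the choice $t = \ln(1+\delta)$ yields the classical form $\Pr[X \ge (1+\delta)\mu] \le \bigl(e^{\delta}/(1+\delta)^{1+\delta}\bigr)^\mu$.

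Next I would convert this into the claimed $e^{-\mu\delta^2/3}$ bound. The job reduces to showing $g(\delta) := (1+\delta)\ln(1+\delta) - \delta - \delta^2/3 \ge 0$ for $\delta \in (0,1)$. This follows from $g(0) = 0$ together with $g'(\delta) = \ln(1+\delta) - 2\delta/3$, which vanishes at $0$ and has derivative $\tfrac{1}{1+\delta} - \tfrac{2}{3}$, positive on $(0,1/2)$ and negative on $(1/2,1)$, while $g'(1) = \ln 2 - 2/3 > 0$, so $g' \ge 0$ throughout $(0,1)$.

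For the lower tail, I would run the same argument with $t<0$: Markov applied to $e^{-tX}$ after replacing $\delta$ by $-\delta$ gives $\Pr[X \le (1-\delta)\mu] \le \bigl(e^{-\delta}/(1-\delta)^{1-\delta}\bigr)^\mu$, and one checks by a similar calculus argument that for $\delta \in (0,1)$ this is at most $e^{-\mu\delta^2/2} \le e^{-\mu\delta^2/3}$. A union bound over the two tail events then gives $\Pr[|X-\mu| \ge \delta\mu] \le 2e^{-\mu\delta^2/3}$.

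The only subtle step is the second one — getting the explicit constant $1/3$ in the exponent from the multiplicative Chernoff form. Everything else (MGF computation, Markov step, optimization in $t$, union bound) is essentially mechanical. I expect this calculus-style estimate on $g(\delta)$ to be the main obstacle to a clean write-up, since the looser forms of Chernoff are immediate but the specific constant $1/3$ requires either the elementary argument above or a Taylor-series remainder estimate.
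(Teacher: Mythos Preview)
Your proof is correct and is the standard exponential-moment derivation of the multiplicative Chernoff bound. Note, however, that the paper does not prove this statement at all: Theorem~\ref{thm_Chernoff} is simply quoted from an external reference (Corollary~4.6 in \cite{mitzenmacher}) as an auxiliary tool, with no accompanying proof. So there is nothing to compare against; your write-up supplies what the paper deliberately omits.
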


We prove the following lemma which states that for a given hypergraph $\mathcal{H}$ and a list assignment for $\mathcal{H}$, partitions the palette into $t$ parts such that for each vertex $v$, the intersection of its list $L_v$ with each part of the partition is reasonably large. This lemma is used in the proof of Theorem \ref{thm:partial_vs_full}.

\begin{lemma}
\label{partition}
Let $\mathcal{H}$ be a hypergraph on $n$ vertices. Let $z, t$ be two positive integers, where $t \ge 2$. Let $\mathcal{L} = \{L_v~:~ v \in V(\mathcal{H})\}$ be an $r$-assignment for $\mathcal{H}$, where $r = 5t (z + \lceil \ln n \rceil)$. For every $v \in V(\mathcal{H})$, let $i_v$ be an integer that can take any value from $1$ to $t$. Then, there exist a partitioning of $\mathcal{P}_{\mathcal{L}}$ into $t$ parts, namely $\mathcal{P}_{\mathcal{L}}^1, \mathcal{P}_{\mathcal{L}}^2, \ldots, \mathcal{P}_{\mathcal{L}}^t$, that satisfy the following property. Let $L_v^j := L_v \cap \mathcal{P}_{\mathcal{L}}^j$.  Then, $\forall v \in V(\mathcal{H})$, $9 (z + \lceil \ln n \rceil) \ge |L_v^{i_v}| \ge z + \lceil \ln n \rceil $.
Thus, $\mathcal{L}' := \{L_v^{i_v} ~:~ v \in V(G) \}$ is a $(z + \lceil \ln n \rceil)$-assignment for $\mathcal{H}$. 
\end{lemma}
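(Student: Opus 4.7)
The plan is a random-partitioning argument closed out by a Chernoff bound and a union bound. First I would assign each color $c \in \mathcal{P}_{\mathcal{L}}$ independently and uniformly at random to one of the $t$ parts, thereby obtaining a random partition $\mathcal{P}_{\mathcal{L}}^1, \ldots, \mathcal{P}_{\mathcal{L}}^t$ of the palette. Fix any $v \in V(\mathcal{H})$ and set $X_v := |L_v \cap \mathcal{P}_{\mathcal{L}}^{i_v}|$. Because each of the $|L_v| = r = 5t(z+\lceil \ln n \rceil)$ colors in $L_v$ independently lands in part $i_v$ with probability $1/t$, the variable $X_v$ is the sum of $r$ independent Bernoulli$(1/t)$ trials, so $\mu := E[X_v] = r/t = 5(z + \lceil \ln n \rceil)$.

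Next I would observe that the target inclusion $X_v \in [z+\lceil \ln n \rceil,\; 9(z+\lceil \ln n \rceil)]$ is precisely the event $|X_v - \mu| \leq (4/5)\mu$. Applying the Chernoff bound (Theorem \ref{thm_Chernoff}) with $\delta = 4/5$ yields
\[
\Pr\!\left[X_v \notin [z+\lceil \ln n \rceil,\; 9(z+\lceil \ln n \rceil)]\right] \;\leq\; 2\exp\!\left(-\frac{\mu \delta^2}{3}\right) \;=\; 2\exp\!\left(-\frac{16(z+\lceil \ln n \rceil)}{15}\right),
\]
and using $z \geq 1$ together with $\lceil \ln n \rceil \geq \ln n$ bounds the right-hand side above by $2 e^{-16/15}\, n^{-16/15}$.

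Finally, a union bound over the $n$ vertices of $\mathcal{H}$ shows that the overall failure probability is at most $2 e^{-16/15} \cdot n^{-1/15} < 1$, so the random experiment succeeds with strictly positive probability, and hence a deterministic partition with the required property exists. The main obstacle, such as it is, lies in the constant bookkeeping: one must verify that the chosen $r = 5t(z+\lceil \ln n \rceil)$ is genuinely large enough for both Chernoff tails to survive the $n$-fold union bound for every $n \geq 1$ (and for the small-$n$ cases where $\lceil \ln n \rceil$ is dominated by $z$). Beyond this bookkeeping no combinatorial subtlety arises, since the per-color random choices are fully independent and $X_v$ depends only on the colors of $L_v$, so no Local Lemma or more refined concentration tool is required.
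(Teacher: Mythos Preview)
Your proposal is correct and follows essentially the same approach as the paper: random uniform partition of the palette, Chernoff bound on $|L_v \cap \mathcal{P}_{\mathcal{L}}^{i_v}|$, and a union bound over the $n$ vertices. The only cosmetic difference is the choice of Chernoff parameter---the paper takes $\delta = \sqrt{3/5}$ to make the exponent exactly $z+\lceil \ln n\rceil$, while you take $\delta = 4/5$ to match the stated interval $[\mu/5,\,9\mu/5]$ exactly; both choices work.
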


\begin{proof}
We first partition the palette of colors  $\mathcal{P}_{\mathcal{L}}$ into $t$ parts, say $\mathcal{P}_{\mathcal{L}}^1, \mathcal{P}_{\mathcal{L}}^2 \dots, \mathcal{P}_{\mathcal{L}}^t$. 
For each color $c \in  \mathcal{P}_{\mathcal{L}}$,  independently and uniformly at random, choose an integer $i$ from $\{1, \ldots , t\}$ and add $c$ to $\mathcal{P}_{\mathcal{L}}^i$. 
Fix a vertex $v$. Let $c \in L_v$.  Let $X_v^c$ be a $0$-$1$ indicator random variable which takes the value $1$ if and only if color $c$ is present in $L_v^{i_v}$. Let $X_v := \sum_{c \in L_v} X_v^c$. Then, $\mu_v : = \mathbb{E}[X_v] = \frac{1}{t} \cdot 5t (z + \lceil \ln n \rceil ) = 5 (z + \lceil \ln n \rceil )$. 
Applying the Chernoff bound given in Theorem \ref{thm_Chernoff} with $\delta = \sqrt{3/5}$, $Pr[|X_v -\mu_v| \geq \delta \mu_v] \leq 2e^{-\mu_v \delta^2/3} = 2e^{-(z + \lceil \ln n \rceil )} \leq \frac{2}{ne^z}$. Applying union bound over all vertices, the probability that there exists a vertex $v$ with $|X_v -\mu_v| \geq \frac{\sqrt{3} \mu_v}{\sqrt{5}}$ is strictly less than $1$. Thus, with a positive probability, $\forall v \in V(\mathcal{H}), 9 (z + \lceil \ln n \rceil) \ge |L_v^{i_v}| \ge z + \lceil \ln n \rceil$. 
\end{proof}

For any $u,v \in V(\mathcal{H})$, since $i_u \neq i_v$ implies $L_u^{i_u} \cap L_v^{i_v} = \emptyset$, we have the following observations  regarding Lemma \ref{partition}.

\begin{observation}
\label{partition obs}
Let $u,v \in V(\mathcal{H})$ such that $i_u \neq i_v$. Then in every $\mathcal{L}'$-coloring of $\mathcal{H}$, $u$ and $v$ receive distinct colors.  
\end{observation}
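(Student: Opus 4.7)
The plan is to unfold the definitions supplied by Lemma \ref{partition} and exploit disjointness of the palette pieces. Recall that $\mathcal{P}_{\mathcal{L}}^1,\ldots,\mathcal{P}_{\mathcal{L}}^t$ form a \emph{partition} of the palette $\mathcal{P}_{\mathcal{L}}$, so for any two distinct indices $j \neq j'$ we have $\mathcal{P}_{\mathcal{L}}^{j} \cap \mathcal{P}_{\mathcal{L}}^{j'} = \emptyset$. The list $\mathcal{L}'$ used for the coloring is defined vertex-by-vertex by $L_v^{i_v} = L_v \cap \mathcal{P}_{\mathcal{L}}^{i_v}$, so in particular $L_v^{i_v} \subseteq \mathcal{P}_{\mathcal{L}}^{i_v}$.

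The main step is then a one-line containment argument. Suppose $u,v \in V(\mathcal{H})$ satisfy $i_u \neq i_v$. Then
\[
L_u^{i_u} \cap L_v^{i_v} \;\subseteq\; \mathcal{P}_{\mathcal{L}}^{i_u} \cap \mathcal{P}_{\mathcal{L}}^{i_v} \;=\; \emptyset,
\]
so the admissible color sets for $u$ and $v$ under $\mathcal{L}'$ are disjoint. Since any $\mathcal{L}'$-coloring $f$ must, by definition, assign $f(u) \in L_u^{i_u}$ and $f(v) \in L_v^{i_v}$, the assigned colors necessarily differ, which is precisely the claim.

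I do not anticipate any obstacle here: the statement is purely a bookkeeping consequence of partitioning the palette and restricting each vertex's list to a single block of that partition. No use of Lemma \ref{partition}'s size guarantees (the bounds $z+\lceil \ln n\rceil \le |L_v^{i_v}| \le 9(z+\lceil \ln n\rceil)$) is needed for this observation; only the disjointness of the blocks is invoked. The proof will therefore consist of the single displayed equation above, prefaced by one sentence recalling the definitions and followed by one sentence concluding $f(u) \neq f(v)$.
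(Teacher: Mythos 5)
Your argument is correct and matches the paper's reasoning exactly: the paper justifies this observation with the single remark that $i_u \neq i_v$ implies $L_u^{i_u} \cap L_v^{i_v} = \emptyset$, which is precisely your containment $L_u^{i_u} \cap L_v^{i_v} \subseteq \mathcal{P}_{\mathcal{L}}^{i_u} \cap \mathcal{P}_{\mathcal{L}}^{i_v} = \emptyset$ followed by the conclusion that the assigned colors must differ. You are also right that the size guarantees from Lemma \ref{partition} play no role here; only the disjointness of the palette blocks is used.
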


\begin{observation}
\label{partition obs 2}
Let $u,v \in V(\mathcal{H})$ and $i, j \in \{1, \ldots t\}$ such that $i \neq j$. Then, $L_u^i \cap L_v^j = \emptyset$.    
\end{observation}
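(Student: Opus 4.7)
The plan is to observe that this is an immediate consequence of the construction in Lemma \ref{partition}, so the ``proof'' is essentially a one-line unpacking of the definitions; no additional machinery is required.

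Specifically, I would argue as follows. By Lemma \ref{partition}, the sets $\mathcal{P}_{\mathcal{L}}^1, \ldots, \mathcal{P}_{\mathcal{L}}^t$ form a partition of the palette $\mathcal{P}_{\mathcal{L}}$, so in particular they are pairwise disjoint: for $i \neq j$, $\mathcal{P}_{\mathcal{L}}^i \cap \mathcal{P}_{\mathcal{L}}^j = \emptyset$. By definition, $L_u^i = L_u \cap \mathcal{P}_{\mathcal{L}}^i \subseteq \mathcal{P}_{\mathcal{L}}^i$ and $L_v^j = L_v \cap \mathcal{P}_{\mathcal{L}}^j \subseteq \mathcal{P}_{\mathcal{L}}^j$. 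Hence
\[
L_u^i \cap L_v^j \subseteq \mathcal{P}_{\mathcal{L}}^i \cap \mathcal{P}_{\mathcal{L}}^j = \emptyset,
\]
which is exactly the claim.

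There is no real obstacle here; the statement is a bookkeeping observation that the authors appear to record separately only because it will be invoked, together with Observation \ref{partition obs}, in the later proofs (e.g.\ of Theorem \ref{thm:partial_vs_full} and Theorem \ref{claw free graph}) whenever they need to glue together independent colorings drawn from different blocks $\mathcal{P}_{\mathcal{L}}^i$ without worrying about color collisions between blocks. Accordingly, I would present the proof in one or two sentences and not dress it up further.
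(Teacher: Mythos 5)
Your proof is correct and is exactly the justification the paper itself gives (the paper merely notes before the two observations that the $\mathcal{P}_{\mathcal{L}}^j$ partition the palette and that $L_v^j = L_v \cap \mathcal{P}_{\mathcal{L}}^j$, so disjointness of the blocks immediately forces $L_u^i \cap L_v^j = \emptyset$ for $i \neq j$). Nothing further is needed.
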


\noindent Below, we state the theorem that gives an upper bound on the CF choice number of a hypergraph in terms of its CF$^*$ choice number. Thus, all the bounds that we prove in this paper for CF$^*$/CFON$^*$/CFON$^*$ choice numbers can be extended to CF/CFON/CFON choice numbers.

\begin{theorem}
\label{thm:partial_vs_full}
    For any hypergraph $\mathcal{H} = (V , \mathcal{E})$ with $|V| = n$, $ch_{CF}(\mathcal{H}) =  O(ch_{CF}^*(\mathcal{H}) + \ln n)$.
\end{theorem}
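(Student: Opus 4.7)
The plan is to apply Lemma \ref{partition} with $t = 2$ in order to split the given list assignment into two disjoint palette parts, use the first part to find a partial CF coloring, and then extend it using the second part (whose colors are disjoint from the first), so that the unique-color witness of every hyperedge is preserved.

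Let $c^{*} := ch^{*}_{CF}(\mathcal{H})$ and set $r := 10(c^{*} + \lceil \ln n \rceil) = O(c^{*} + \ln n)$. I shall show $\mathcal{H}$ is $r$-CF-choosable. Given any $r$-assignment $\mathcal{L}$ for $\mathcal{H}$, apply Lemma \ref{partition} with $t = 2$, $z = c^{*}$, and $i_v = 1$ for every $v \in V$. This produces a partition $\mathcal{P}_{\mathcal{L}} = \mathcal{P}^1_{\mathcal{L}} \sqcup \mathcal{P}^2_{\mathcal{L}}$ such that, for every $v$, $c^{*} + \lceil \ln n \rceil \le |L_v^1| \le 9(c^{*} + \lceil \ln n \rceil)$. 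Since $|L_v^1| + |L_v^2| = r = 10(c^{*} + \lceil \ln n \rceil)$, the upper bound on $|L_v^1|$ forces $|L_v^2| \ge c^{*} + \lceil \ln n \rceil \ge 1$. This reciprocal inequality is precisely what makes $t = 2$ enough; one part will host the partial CF coloring, the other will furnish at least one fresh color for every remaining vertex.

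Now $\mathcal{L}^1 := \{L_v^1 : v \in V\}$ has every list of size at least $c^{*}$, so by truncating to a $c^{*}$-sized subset of each list and applying the definition of $c^{*}$, $\mathcal{H}$ admits a partial CF coloring $f_1$ in which every color used lies in $\mathcal{P}^1_{\mathcal{L}}$. Let $V'$ denote the set of vertices colored by $f_1$. For each $v \in V \setminus V'$ pick, arbitrarily, any color from $L_v^2 \subseteq \mathcal{P}^2_{\mathcal{L}}$ (possible since $|L_v^2| \ge 1$) and assign it to $v$; call the resulting total coloring $f$. To verify that $f$ is an $\mathcal{L}$-CF-coloring, consider any $E \in \mathcal{E}$. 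By the partial-CF property of $f_1$ there is some $u_E \in E \cap V'$ with $f_1(u_E) \neq f_1(w)$ for every other colored $w \in E$. Under $f$, vertices in $E \cap V'$ keep their colors (all in $\mathcal{P}^1_{\mathcal{L}}$), while vertices in $E \setminus V'$ receive colors in $\mathcal{P}^2_{\mathcal{L}}$, disjoint from $f(u_E) \in \mathcal{P}^1_{\mathcal{L}}$. Hence $u_E$ remains uniquely colored inside $E$. This gives $ch_{CF}(\mathcal{H}) \le r = O(ch^{*}_{CF}(\mathcal{H}) + \ln n)$.

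I do not anticipate a genuine obstacle beyond setting up the palette split: the heavy lifting is done by Lemma \ref{partition}, and the key conceptual point is merely that with $t=2$ the upper bound on $|L_v^{i_v}|$ promised by that lemma translates into an automatic lower bound on $|L_v^{3-i_v}|$, producing a disjoint "extension" palette that cannot collide with the CF witnesses selected during the partial-coloring stage.
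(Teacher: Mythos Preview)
Your proof is correct and is essentially identical to the paper's own argument: both apply Lemma \ref{partition} with $t=2$ and $i_v=1$ for all $v$, use the lists $L_v^1$ to obtain an $\mathcal{L}'$-CF$^*$-coloring, and then color each remaining vertex arbitrarily from its nonempty list $L_v^2$, invoking the disjointness of $\mathcal{P}^1_{\mathcal{L}}$ and $\mathcal{P}^2_{\mathcal{L}}$ (the paper cites Observation \ref{partition obs 2}) to preserve the CF witnesses. Your write-up is slightly more explicit about the truncation step and the verification that each hyperedge retains its uniquely colored vertex, but there is no substantive difference.
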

\begin{proof}
Let $z := ch_{CF}^*(\mathcal{H})$ and let $\mathcal{L} = \{L_v ~:~v \in V\}$ be an $r$-assignment for $\mathcal{H}$, where $r = 10(z + \lceil \ln n\rceil )$. Applying Lemma \ref{partition} with $t= 2$ and $i_v = 1$, $~\forall v \in V$, we get a $(z + \lceil \ln n \rceil)$-assignment $\mathcal{L}'$ for $\mathcal{H}$. Consider an $\mathcal{L}'$-CF$^*$-coloring of $\mathcal{H}$. Let $U$ denote the set of uncolored vertices in this coloring. For each $u \in U$, assign any color from the list $L_u^2$ (this list is non-empty as $L_u = L_u^1 \uplus L_u^2$, where $|L_u| = 10(z + \lceil \ln n\rceil )$ and $|L_u^1| = |L_u^{i_u}| \leq 9(z + \lceil \ln n\rceil)$ (by Lemma \ref{partition}) ). By Observation \ref{partition obs 2}, the coloring thus obtained will continue to maintain its conflict-freeness property. 
\end{proof}

The bound in Theorem \ref{thm:partial_vs_full} is asymptotically tight due to the Example \ref{ex:partial_vs_full}.

\section{Upper bounds}
\label{sec_upper bounds}
The following theorem is an extension of Theorem 1.2 in \cite{pach2009conflict} from CF coloring to list CF coloring of hypergraphs. The proof below is also a straightforward extension of the proof in \cite{pach2009conflict}.  

\begin{theorem}
\label{thm:improve_Pach}
For any positive integers $t$ and $\Gamma$, the partial list conflict-free chromatic number of any hypergraph $\mathcal{H}=(V,\mathcal{E})$ in which each edge is of size at least $2t - 1$ and intersects at most $\Gamma$ other hyperedges is $O(t \Gamma^{1/t} \ln \Gamma)$.
\end{theorem}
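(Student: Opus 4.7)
The plan is to mirror the proof of Theorem~1.2 in \cite{Pach2009}, modifying only the probability estimates to accommodate the list setting. The key observation that makes the extension straightforward is that if each vertex $v$ is independently colored by picking $c(v)$ uniformly at random from its list $L_v$ of size $K$, then for any two distinct vertices $u,v$,
\[
\Pr[c(u) = c(v)] \;=\; \frac{|L_u \cap L_v|}{K^2} \;\leq\; \frac{1}{K},
\]
which matches the collision probability in the common-palette setting. Moreover, collision events involving pairwise disjoint sets of vertices are mutually independent, so the dependency structure is also identical to the non-list case. Hence every probabilistic estimate used by Pach and Tardos carries over.

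I would set $K := C \cdot t\,\Gamma^{1/t}\ln\Gamma$ for a sufficiently large absolute constant $C>0$ and let $\mathcal{L}$ be an arbitrary $K$-assignment for $\mathcal{H}$. Each vertex $v$ receives a uniformly random color from $L_v$, independently across vertices. For each hyperedge $E$, define the bad event
\[
B_E := \bigl\{\,\text{every color that appears on } E \text{ appears at least twice in } E\,\bigr\},
\]
i.e.\ $E$ contains no uniquely colored vertex. Since $B_E$ is determined solely by the colors of vertices in $E$, it is mutually independent of all $B_{E'}$ with $E'\cap E = \emptyset$; by hypothesis each edge meets at most $\Gamma$ others, so the dependency degree is at most $\Gamma$.

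Next I would bound $\Pr[B_E]$ exactly as in \cite{Pach2009}: by a union bound over all partitions of the vertex set of $E$ into color classes of size at least $2$. Using the pairwise collision bound $1/K$ and the hypothesis $|E| \geq 2t-1$ (so each such partition has at most $t-1$ blocks, contributing at most $K^{-t}$ times a combinatorial coefficient), one obtains $\Pr[B_E] \leq \frac{1}{e(\Gamma+1)}$ for the chosen $K$. The Local Lemma (Lemma~\ref{lem:local}) then ensures that with positive probability none of the $B_E$ occurs, which produces an $\mathcal{L}$-CF-coloring of $\mathcal{H}$. Since $\mathcal{L}$ was an arbitrary $K$-assignment, this proves $ch_{CF}(\mathcal{H}) \leq K = O(t\,\Gamma^{1/t}\ln\Gamma)$.

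The main obstacle will be the combinatorial bookkeeping in the estimate of $\Pr[B_E]$: one must control the sum over partitions with varying numbers of blocks, and then carefully balance the combinatorial factor against $K^{-t}$ to confirm that the $\ln\Gamma$ factor absorbed into $K$ is indeed enough to push $\Pr[B_E]$ below $1/(e(\Gamma+1))$. Beyond this arithmetic, no genuinely new idea is needed beyond the pairwise-collision observation displayed above, which is what makes the passage from the non-list to the list version ``straightforward''.
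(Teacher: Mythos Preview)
Your sketch contains a genuine gap, and it stems from a reversed inequality. You write that ``$|E|\ge 2t-1$ (so each such partition has at most $t-1$ blocks, contributing at most $K^{-t}$\ldots)''. This is false: a \emph{lower} bound on $|E|$ gives no \emph{upper} bound on the number of blocks in a partition into parts of size $\ge 2$. If $|E|=m$, such a partition may have up to $\lfloor m/2\rfloor$ blocks, and your union bound becomes
\[
\Pr[B_E]\ \le\ \sum_{j=1}^{\lfloor m/2\rfloor}\frac{j^{m}}{j!}\,K^{\,j-m}\ \approx\ \Bigl(\frac{em}{2K}\Bigr)^{m/2},
\]
which is useless once $m$ exceeds a constant multiple of $K$. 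The hypotheses of the theorem place no upper bound on hyperedge sizes (an edge may be enormous and still meet only $\Gamma$ others), and for such edges a single-shot uniform coloring from lists of size $K=O(t\Gamma^{1/t}\ln\Gamma)$ almost surely has \emph{no} singleton color class. So the Local Lemma step cannot even be reached.

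The paper (following Pach--Tardos) avoids this in two ways your sketch omits. First, it trims $V$ to a subset $V'$ so that every edge satisfies $2t-1\le |f\cap V'|\le (2t-1)(\Gamma+1)$; this already means the result is only for $ch^{*}_{CF}$, not $ch_{CF}$ (the table in Section~\ref{subsec_our_methods} and the last line of the proof confirm this). Second --- and crucially --- it does \emph{not} use a single-shot coloring. It colors iteratively: in round $i$ each still-uncolored vertex receives the $i$-th color of its list with probability $q=\frac{1}{30t\Gamma^{1/t}}$. For an edge with $|f|>2t-1$ one then looks only at the $2t$ vertices $f'$ colored in the latest rounds; if $f$ has no unique color, then $f'$ carries at most $t$ distinct colors, and the partition analysis is applied to $f'$ (of fixed size $2t$), not to $f$. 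The $\ln\Gamma$ factor in the final bound comes from the number $T$ of rounds needed to ensure every vertex of $V'$ gets colored. Neither the trimming nor the round-by-round structure can be dropped; your single-shot scheme is not a faithful ``mirror'' of the Pach--Tardos argument and cannot be patched by adjusting constants.
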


\begin{proof} 
Suppose $t \ge \Gamma$. Then, the bound we get is at best $O(\Gamma^{1+(1/t)} \ln \Gamma)$. Let $\Delta$ denote the maximum degree of $\mathcal{H}$. It is easy to see that $\Gamma\ge \Delta$. So the bound we get is weaker and therefore follows from the upper bound of $\Delta+1$ given by Theorem \ref{max degree + 1}. For the rest of the proof, we assume that $t < \Gamma$.
Let $\mathcal{L} = \{L_v~:~v \in V\}$, where each list $L_v$ ($\subseteq \mathbb{N}$) is sorted in increasing order. Consider a hyperedge $f$ in $\mathcal{H}$. We know $|f| \geq  2t-1$. We color the points in $f$ using the following iterative process until all the points in $f$ are colored. In Round $i$, each uncolored point then is independently assigned the $i$-th color in its list with a probability $q$ (to be determined later). 

\begin{lemma} 
\label{lem:prob_estimation_of_bad_event} For any hyperedge $f$, $Pr[f \mbox{ is not CF colored from }\mathcal{L}] \leq 2(eqt)^t.$  
\end{lemma}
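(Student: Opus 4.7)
The plan is to follow the Pach--Tardos argument, adapted to the list-coloring setting. For each vertex $v$ in the hyperedge $f$, let $R_v$ denote the round in which $v$ is first colored. Because in each round every uncolored vertex is independently assigned its next-in-list color with probability $q$, the variables $\{R_v\}_{v \in f}$ are i.i.d.\ geometric with parameter $q$, and $v$ ultimately receives the color $L_v[R_v]$. The event $B_f$ that $f$ is not conflict-free colored from $\mathcal{L}$ asks that for every $v \in f$ some $u \in f \setminus \{v\}$ satisfy $L_u[R_u] = L_v[R_v]$, i.e., every color class inside $f$ has size either $0$ or at least $2$.

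I will decompose $\Pr[B_f]$ into two pieces, each contributing roughly $(eqt)^t$. Let $E_1$ be the event that at least $t$ vertices of $f$ still have $R_v > 2t-1$, and let $E_2 = B_f \setminus E_1$. On $E_1$, independence of the $R_v$'s and a union bound over the choice of the $t$ late vertices yield
\[
\Pr[E_1] \;\le\; \binom{|f|}{t}\,(1-q)^{(2t-1)t}.
\]
Since $|f| \le (2t-1)(\Gamma+1)$ after passing to the subset $V'$, this should collapse to at most $(eqt)^t$ by elementary estimates with the choice $q = 1/(30\,t\,\Gamma^{1/t})$ used in the theorem.

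On $E_2$, at least $|f| - (t-1) \ge t$ vertices of $f$ are colored within rounds $1, \ldots, 2t-1$, and each of them must nevertheless possess a color-twin in $f$. The sortedness of the lists is the crucial ingredient here: since the entries of each $L_u$ are distinct, for every pair $u \ne v$ and every value $r$ there is \emph{at most one} round $r'$ with $L_u[r'] = L_v[r]$ (namely, the position of $L_v[r]$ in $L_u$, if it appears at all), and $\Pr[R_u = r' \mid R_v = r] \le q$. I will encode each realization in $E_2$ by a witness structure matching early-colored vertices to twins in $f$; union-bounding over this structure and then multiplying by the per-pair bound $q$ (raised to roughly $t/2$) yields $\Pr[E_2] \le (eqt)^t$. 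Adding the two contributions completes the bound $\Pr[B_f] \le 2(eqt)^t$.

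The main obstacle is the clean bounding of $\Pr[E_2]$. A naive union bound over arbitrary twin functions $\tau \colon (\text{colored vertices}) \to f$ loses a factor of $s^s$ that the per-pair $q$'s cannot offset. The enumeration must therefore be arranged more carefully, for instance by extracting a partial matching of size about $t/2$ from the twin relation, in which case the number of matchings scales like $(2t)^t$ and couples correctly with the product of $q$'s to give $(eqt)^t$. Making this combinatorial counting precise, and simultaneously handling the correlations introduced when several vertices share a common twin, is the only genuinely new step compared to the original Pach--Tardos argument, where global colors make the pairing structure combinatorially simpler.
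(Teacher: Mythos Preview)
Your decomposition into $E_1$ and $E_2$ does not work. The event $E_1$ --- that at least $t$ vertices of $f$ satisfy $R_v > 2t-1$ --- is not rare; it is typical. With $q = 1/(30t\Gamma^{1/t})$ one has $\Pr[R_v \le 2t-1] = 1-(1-q)^{2t-1} \approx (2t-1)q \le 1/(15\Gamma^{1/t})$, so the expected number of ``early'' vertices in $f$ is tiny when $|f|=2t-1$, and in any case the number of ``late'' vertices vastly exceeds $t$ almost surely. Your union bound $\binom{|f|}{t}(1-q)^{(2t-1)t}$ is a valid upper bound on $\Pr[E_1]$, but it is far larger than $1$ and certainly does not collapse to $(eqt)^t$: plugging in $|f| = (2t-1)(\Gamma+1)$ gives a leading factor on the order of $(2e\Gamma)^t$, while $(1-q)^{(2t-1)t} \approx e^{-t/(15\Gamma^{1/t})}$ is essentially $1$. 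Since you are upper bounding $\Pr[B_f]$ by $\Pr[E_1] + \Pr[E_2]$, a vacuous bound on $\Pr[E_1]$ kills the argument. There is no way to rescue this split: the round numbers $R_v$ are simply not concentrated in $\{1,\ldots,2t-1\}$, so ``late colouring'' carries no information about conflict-freeness.

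The paper's argument is structurally different and does not refer to early versus late rounds at all. It splits on the size of $f$. When $|f|=2t-1$, failure of conflict-freeness forces every colour to appear at least twice, so at most $t-1$ distinct colours appear; when $|f|>2t-1$, one passes to the $2t$ vertices $f'$ receiving the largest colours and observes that at most one colour in $f'$ can be a singleton, so $f'$ sees at most $t$ colours. In either case the probability of ``at most $k$ colours'' is bounded by a partition count: fix a partition into $k$ blocks, pick a leader in each block, and note that each non-leader matches its leader's colour with probability at most $q$ (this is where the list structure enters, exactly as you identified). The number of partitions into $k$ parts is at most $e^k k^{2t-1-k}$, and summing $e^k(qk)^{2t-1-k}$ over $k\le t-1$ (or the analogous sum for $f'$) gives the $(eqt)^t$ bound. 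The ``leader/non-leader'' accounting is what replaces your proposed matching extraction, and it avoids the combinatorial difficulties you flagged in the $E_2$ step.
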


\begin{proof}
We split the proof into two cases.
\\
\textbf{Case 1: $|f| = 2t-1$.} 
\\ 
We claim that, given a partition of the points in $f$ into $k$ parts, the probability that the coloring of the points of $V$ induces the same partition of $f$ is at most $q^{2t-1-k}$. Treating the first point that gets colored in each part as a \emph{leader}, it is about the probability that the leaders get distinct colors and the rest of the points in each part get the color of its leader. The probability that the rest of the points in each part get the color of its leader given every leader gets a distinct color is at most $q^{2t-1-k}$. We know that the number of ways to partition a $(2t-1)$-sized set into $k$ parts is at most $\frac{k^{2t-1}}{k!} \leq e^k k^{2t-1-k}$. The probability that $f$ is colored with exactly $k$ colors is at most $A_k := e^k (qk)^{2t-1-k}$. We thus have, 
\\
$Pr[f \mbox{ is not $\mathcal{L}$-CF-colored}]  \leq Pr[f \mbox{ is colored with at most $t - 1$ colors}] \leq \sum\limits_{k=1}^{t-1} A_k \leq  2 A_{t-1} \\ \leq (eqt)^t$.
\\
\textbf{Case 2: $|f| > 2t-1$.}
\\ 
Let $f'$ denote a set of $2t$ points in $f$ such that every point in $f'$ got a color that is greater than or equal to the color of every point in $f\setminus f'$. If $f$ is not $\mathcal{L}$-CF-colored, then there is at most one color that appears exactly once in $f'$. Given a partition of the points in $f'$ into $k$ parts, the probability that the coloring of the points of $V$ induces the same partition of $f'$ is at most $q^{2t-k}$ (the probability calculation is same as the one in Case 1). The number of ways to partition a $(2t)$-sized set into $k$ parts is at most $e^k k^{2t-k}$. The probability that $f'$ is colored with exactly $k$ colors is at most $B_k := e^k (qk)^{2t-k}$. Thus, 
\\
$Pr[f \mbox{ is not $\mathcal{L}$-CF-colored}] \leq Pr[f' \mbox{ is colored with at most t colors}] \leq \sum\limits_{k=1}^{t} B_k \leq  2 B_{t} \leq  2 (eqt)^t$.
 \end{proof}

We first trim the hypergraph $\mathcal{H}=(V,\mathcal{E})$ as described here. We designate some $2t-1$ points for each hyperedge in $\mathcal{H}$. Let $V'$ be obtained from $V$ by removing all those points in $V$ that are not designated to any hyperedge. Let $\mathcal{H'} = (V',\mathcal{E'})$ be the hypergraph thus obtained, where $\mathcal{E'}= \{f\cap V'~:~f \in \mathcal{E}\}$. We observe that for each $f \in \mathcal{E'}$, $2t-1 \leq |f| \leq (2t-1)(\Gamma + 1) < 2 \Gamma^2$ (since we assume $t < \Gamma$). 

We now color the points in $V'$ using the iterative process described above.  In Round $i$, each uncolored point then in $V'$ is independently assigned the $i$-th color in its list with probability $q$. Let $q=\frac{1}{30t\Gamma^{1/t}}$. For a hyperedge $f \in \mathcal{E'}$, let $A_f$ denote the event that $f$ is not CF-colored from $\mathcal{L}$ after the iterative process is over. From Lemma \ref{lem:prob_estimation_of_bad_event}, we know that  $Pr[A_f] \leq \frac{1}{5\Gamma}$. Let $T= O(t \Gamma^{1/t}\ln \Gamma )$. Let $B_f$ denote the event that some point in $f$ is not colored after $T$ rounds of the iterative coloring. We have $Pr[B_f] \leq |f| \cdot (1-q)^T < 2 \Gamma^2(1-q)^T \leq \frac{1}{20\Gamma^3}$. Let $C_f$ be the bad event that  $A_f$  or $B_f$ occurs. Then, $Pr[C_f] \leq \frac{1}{e(\Gamma + 1)}$. Note that $C_f$ is independent from all events $C_g$ for edges $g$ that are disjoint from $f$. Applying Lemma \ref{lem:local} (the Local Lemma) with $p = \frac{1}{e(\Gamma + 1)}$ and $d = \Gamma$, we conclude that there is a non-zero probability that $\mathcal{H}'$ is $\mathcal{L}$-CF-colored, and thereby $\mathcal{H}$ is $\mathcal{L}$-CF$^*$-colored.  
\end{proof}

\begin{corollary}
\label{min degree log delta}
     Let $G$ be a graph with maximum degree $\Delta$ and minimum degree $\Omega(\ln \Delta)$. We have,
   (i) $ch^*_\mathrm{ON}(G) = O(\ln^2 \Delta)$. 
   (ii)  $ch^*_\mathrm{CN}(G) = O(\ln^2 \Delta)$.
   
\end{corollary}

\begin{proof}
Consider the open/closed neighborhood hypergraph of $G$. Every hyperedge in this hypergraph is of size at least $2t -1$ and overlaps with $\Gamma$ other hyperedges, where $t = \Omega(\ln \Delta)$ and $\Gamma = \Delta^2$. Apply Theorem \ref{thm:improve_Pach} to obtain the desired result. 
\end{proof}

Below we prove the main result of this section.

\begin{theorem}
\label{thm: general cfcn upper bound}
For any graph $G$ with maximum degree $\Delta$, $ch^*_{CN}(G) = O(\ln^2 \Delta)$.
\end{theorem}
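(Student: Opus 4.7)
Let $\mathcal{L} = \{L_v : v \in V(G)\}$ with $|L_v| \geq C \ln^2 \Delta$ for a sufficiently large constant $C$. The plan is a two-phase list CF$^*$ coloring built on a degree-based partition of $V(G)$. Fix a threshold $\tau = \Theta(\ln \Delta)$ and set $A = \{v : \deg_G(v) > \tau\}$, $B = V(G) \setminus A$, $A_1 = \{v \in A : \deg_{G[A]}(v) \geq \tau/2\}$, and $A_2 = A \setminus A_1$. Vertices in $A_1$ have large closed neighborhood inside $A$; vertices in $A_2$ have more than $\tau/2$ neighbors in $B$; and vertices in $B$ have degree at most $\tau$.

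Phase 1 (coloring $A$): consider the hypergraph $\mathcal{H}_1 = (A, \{N_G[v] \cap A : v \in A_1\})$. Each of its hyperedges has size at least $\tau/2 + 1 = \Omega(\ln \Delta)$ and overlaps with at most $\Delta^2$ other hyperedges. Applying Theorem \ref{thm:improve_Pach} with $t = \Theta(\ln \Delta)$ and $\Gamma = \Delta^2$ gives $ch_{CF}(\mathcal{H}_1) = O(t \Gamma^{1/t} \ln \Gamma) = O(\ln^2 \Delta)$. Hence, for $C$ sufficiently large, the lists $\{L_v : v \in A\}$ admit a list CF coloring $f_1$ of $\mathcal{H}_1$, so for each $v \in A_1$ there is a uniquely $f_1$-colored $w_v \in N_G[v] \cap A$.

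Phase 2 (coloring $B$): based on $f_1$, for each $u \in B$ I remove from $L_u$ the colors (i) $\{f_1(w_v) : v \in A_1,\ u \in N_G(v)\}$, which prevents any Phase-2 color from spoiling $w_v$'s uniqueness; and (ii) $\bigcup_{v \in N_G[u] \cap (A_2 \cup B)} \{f_1(x) : x \in N_G[v] \cap A\}$, which ensures that if $u$ later serves as the Phase-2 unique vertex for some $v \in A_2 \cup B$, then no $A$-vertex in $N_G[v]$ already uses that color. Set (i) has size at most $\deg_G(u) \leq \tau$, and since $|N_G[u] \cap (A_2 \cup B)| \leq \tau + 1$ with each $|N_G[v] \cap A| \leq \tau + 1$, set (ii) has size $O(\tau^2) = O(\ln^2 \Delta)$. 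For $C$ large enough, the restricted list $L'_u$ still has size $\geq \tau + 2$. Now take $\mathcal{H}_2 = (B, \{N_G[v] \cap B : v \in A_2 \cup B\})$: every hyperedge is nonempty ($v \in B$ lies in its own, and $v \in A_2$ has more than $\tau/2$ $B$-neighbors), and $\mathcal{H}_2$ has maximum degree at most $\tau + 1 = O(\ln \Delta)$. Theorem \ref{max degree + 1} then yields a list CF coloring $f_2$ of $\mathcal{H}_2$ drawn from $\{L'_u\}$.

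Combining $f_1$ on $A$ with $f_2$ on $B$ produces a coloring $f$ of $V(G)$ using colors from each $L_v$. Validity as a CFCN$^*$ coloring: each $v \in A_1$ is served by $w_v$, whose Phase-1 uniqueness in $N_G[v] \cap A$ extends to all of $N_G[v]$ by restriction (i); each $v \in A_2 \cup B$ is served by the Phase-2 unique vertex $w'_v \in N_G[v] \cap B$, whose uniqueness extends to $N_G[v]$ by restriction (ii). The main delicacy is calibrating the threshold so that all four requirements hold simultaneously: $A_1$-hyperedges must be large enough for Theorem \ref{thm:improve_Pach} to deliver $O(\ln^2 \Delta)$ with $\Gamma = \Delta^2$; $A_2$-hyperedges in $\mathcal{H}_2$ must be nonempty; $B$-vertices must be low-degree enough that $\mathcal{H}_2$ has maximum degree $O(\ln \Delta)$; and the restriction must waste only $O(\ln^2 \Delta)$ colors. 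Setting $\tau = \Theta(\ln \Delta)$ reconciles all of these, and I expect verifying the precise sizes of the removed sets in (ii) to be the main piece of technical bookkeeping.
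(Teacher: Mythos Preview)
Your proposal is correct and follows essentially the same approach as the paper: the degree-based partition $V(G)=A\uplus B$, the refinement $A=A_1\uplus A_2$, the application of Theorem~\ref{thm:improve_Pach} to the hypergraph $\mathcal{H}_1$ on $A$, the two list-restriction steps for vertices in $B$, and the application of Theorem~\ref{max degree + 1} to $\mathcal{H}_2$ all match the paper's argument. The only cosmetic difference is that you use $\tau/2$ as the secondary threshold for $A_1$ while the paper reuses $\ln\Delta$, but this does not affect the argument.
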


\begin{proof}
We first partition $V(G)$ into two parts. Let $A : = \{v \in V(G) ~:~ d_G(v) \ge \ln \Delta \}$ and $B : = V(G) \setminus A$. We further partition $A$ into two components, namely $A_1$ and $A_2$. Let $A_1: = \{v \in A~:~ d_A(v) \ge \ln \Delta \}$ and $A_2: = A \setminus A_1$, where $d_A(v)$ denotes the number of neighbors of $v$ in $A$. Thus every vertex in $A_2$ has some neighbor in $B$, whereas a vertex in $A_1$ may or may not have a neighbor in $B$.

Let $\mathcal{L} = \{L_v~:~v \in V(G)\}$ be a $r$-assignment for $G$ given by the adversary, where $r = K\cdot \lceil \ln^2 \Delta \rceil$ and $K$ is a sufficiently large integer constant.  We obtain the desired $\mathcal{L}$-CFCN$^*$-coloring of $G$ by list CF$^*$-coloring  hypergraphs $\mathcal{H}_1, \mathcal{H}_2$ that are defined below. A list CF$^*$-coloring of the hypergraph $\mathcal{H}_1$ will ensure that every vertex in $A_1$ sees a unique color in its closed neighborhood. In a similar way, a list CF$^*$-coloring of the hypergraph $\mathcal{H}_2$ will ensure that every vertex in $A_2 \cup B$ sees a unique color in its closed neighborhood. Before coloring $\mathcal{H}_2$, we update the lists of its vertices to avoid `conflicts'. We explain this in detail below.   
\begin{itemize}
    \item Let $\mathcal{H}_1 = (V_1, \mathcal{E}_1)$ be a hypergraph, where $V_1 = A$ and $\mathcal{E}_1 = \{N_G[v] \cap A~ : ~v \in A_1 \}$. From the definition of $A_1$, the minimum size of a hyperedge in $\mathcal{H}_1$ is at least $ \ln \Delta$. Each hyperedge in $\mathcal{H}_1$ overlaps with at most $\Delta^2$ other hyperedges. Hence, by Theorem \ref{thm:improve_Pach}, we have  $ch^*_{CF}(\mathcal{H}_1) = O(\ln^2 \Delta)$. Let $f_1$ denote this coloring. Under $f_1$, every vertex in $A_1$ sees a unique color in its closed neighborhood. 
\end{itemize}
Next, by list CF$^*$ coloring a hypergraph $\mathcal{H}_2$ (defined below) having vertex set $B$, we intend to take care of every vertex in $A_2 \cup B$. This however can lead to problems of two types which are described below. 
\\ (i) Let $c_v$ be the unique color seen by a vertex $v \in A_1$ under the coloring $f_1$. We should ensure that none of the neighbors of $v$ in $B$ receive the color $c_v$. For this, we update the lists of vertices $b \in B$ as $L_b^1 = L_b \setminus S_b$, where $S_b = \{c_v~:~v \in N_G(b) \cap A_1\}$. Note that $|S_b| < \ln \Delta$. 
\\ (ii) For every $w \in A_2 \cup B$, let $c_w$ be the unique color to be seen by $w$ (at some vertex $b \in B$) under the list CF$^*$ coloring of $\mathcal{H}_2$ (to be defined) having vertex set $B$. We need to ensure that such a vertex $w$ does not have a neighbor in $A$ which received the color $c_w$ under the coloring $f_1$. For this purpose, for each vertex $b \in B$, we define $T_b = \{f_1(a)~:~a \in N_G[w] \cap A, w \in N_G[b] \cap (A_2 \cup B)\} $. Since $|N_G[b]| \leq \ln \Delta$ and $|N_G[w] \cap A| \leq \ln \Delta$, for all $w \in (A_2 \cup B)$, we have $|T_b| \leq \ln^2\Delta$. Let $L_b^2 = L_b^1 \setminus T_b$, for every $b \in B$. By taking a sufficiently large integer constant $K$ (recall that,  every list $L_v$ is of size $K\cdot \lceil 
\ln^2 \Delta \rceil$), we can ensure that $|L_b^2| \geq \ln \Delta + 1$.  
\begin{itemize}
    \item Let $\mathcal{L}^2 = \{L_b^2~:~b \in B\}$. Let $\mathcal{H}_2 = (V_2, \mathcal{E}_2)$, where $V_2 = B$ and $\mathcal{E}_2 = \{N_G[v] \cap B ~: ~v \in A_2 \cup B\}$.  From the definition of $B$, the maximum degree of $\mathcal{H}_2$ is at most $ \ln \Delta$. Hence, by Theorem \ref{max degree + 1}, $\mathcal{H}_2$ admits an $\mathcal{L}^2$-CF$^*$-coloring. 
\end{itemize}    
This completes the proof. 
\end{proof}

\section{Hardness Results}
\label{Hardness Results}

\subsection{Conflict-free open neighborhood choosability}
The only connected  graph with a CFON-choice-number equal to 1 is a path on 2 vertices. Therefore, deciding whether a
graph is 1-CFON choosable can be done in polynomial time.

Let $k \geq 3$ be an integer. We now show the $\Pi_2^{P}$-hardness of the \textsc{$k$-CFON-choosability problem} and the \textsc{$k$-CFON$^*$-choosability problem} by reducing from the \\ \textsc{$k$-choosability problem} (which is defined below). See section \ref{sec list Cf coloring} for the definition of $ k$-choosability.

\defproblem{\textsc{$k$-choosability problem ($k$-CH problem)}}{A graph $G$ and an integer $k \geq 3$.}{Is $G$ $k$-choosable?}

\defproblem{\textsc{$k$-CFON$^*$-choosability problem ($k$-CFON$^*$-CH problem)}}{A graph $G$ and a positive integer $k$.}{Is $G$ $k$-CFON$^*$-choosable?}

\defproblem{\textsc{$k$-CFON-choosability problem ($k$-CFON-CH problem)}}{A graph $G$ and a positive integer $k$.}{Is $G$ $k$-CFON-choosable?}

\begin{theorem}
\label{thm:choosability_known_results}
The \textsc{$k$-choosability  problem} is $\Pi_2^{P}$-complete on:
\begin{enumerate}
\item bipartite graphs, for any constant $k \geq 3$ (see \cite{gutner2009some}).
\item planar triangle-free graphs, when $k=3$ (see \cite{gutner1996complexity}).
\item planar graphs, when $k=4$ (see \cite{gutner1996complexity}).
\end{enumerate}
\end{theorem}


It is easy to see that the \textsc{$k$-CFON-CH problem} is in $\Pi_2^{P}$. Given a graph $G$, a $k$-assignment $\mathcal{L}= \{L_v ~:~ v \in V(G)\}$, and a coloring function $f: V(G) \longrightarrow \bigcup_{v \in V(G)} L_v$, a polynomial time verifier verifies the following (i) every vertex $v \in V(G)$ is assigned a color from its list, i.e., $f(v) \in L(v)$, (ii) every vertex has some unique color in its open neighborhood. In a similar way, we can show that \textsc{$k$-CFON$^*$-CH problem} is in $\Pi_2^{P}$.

Below, we present two constructions, one for a graph $H_G$ and another for a graph $H'_G$, both derived from a given graph $G$. 

\paragraph{Construction of $H_G$:} We construct $H_G$ by subdividing each edge of $G$ exactly once. In other words, for every edge $ uv \in E(G)$ with the endpoints $u$ and $v$, we introduce a new vertex named $x_{uv}$, such that $N_{H_G}(x_{uv}) = \{u, v\}$. 
Note that the vertex $x_{uv}$ is also known as $x_{vu}$.
The vertices of $G$ are called \emph{original vertices} in $H_G$, and the remaining vertices in $H_G$ are called \emph{subdivided vertices}.
It is easy to see that $H_G$ can be constructed from $G$ in $O(|V(G)| + |E(G)|)$ time.

\paragraph{Construction of $H'_G$:} We first construct the graph $H_G$ as described above. Then, we construct the graph $H'_G$ from the graph $H_G$ by attaching a pendant vertex to each original vertex in $H_G$.  We leave the subdivided vertices untouched. Thus, $H'_G$ has three types of vertices: original vertices (which are vertices of $G$), subdivided vertices (which are the newly introduced vertices in $H_G$), and pendant vertices (which are the newly introduced vertices in $H'_G$). It is easy to see that $H'_G$ can be constructed from $H_G$ in $O(|V(H_G)|)$ time.

We make the following observations concerning graphs $H_G$ and $H'_G$.

\begin{observation}
\label{obs subdivided}
For each subdivided vertex $x_{uv}$ to see a unique color in its open neighborhood, the vertices $u$ and $v$ should receive distinct colors in $H_G$.  
\end{observation} 

\begin{observation}
\label{obs subdivided pen}
Let $p_i$ be the pendant vertex attached to $v_i$ in $H'_G$. For each pendant vertex $p_i$ to see a unique color in its open neighborhood, the vertex $v_i$ must be colored in $H'_G$. 
\end{observation} 

\begin{observation}
\label{obs:H_G-property_preserve}
Both $H_G$ and $H'_G$ preserve graph properties such as bipartiteness, triangle freeness, and planarity.
\end{observation} 

We now prove the following lemmas.

\begin{lemma}
\label{lem: subdivision}
A graph $G$ is $k$-choosable if and only if the graph $H_G$ is $k$-CFON-choosable, where $k \ge 3$ is an integer.
\end{lemma}

\begin{proof}
Let $n$ denote $|V(G)|$. Suppose $G$ is $k$-choosable. We will show that $H_G$ is $k$-CFON-choosable. Let $\mathcal{L} = \{L_v~:~v \in V(H_G)\}$ be a $k$-assignment for $H_G$. Let $\mathcal{L}_G$ denote $\mathcal{L}$ restricted to the original vertices in $H_G$. Let $f$ be a proper $\mathcal{L}_G$-coloring of $G$. We now obtain an  $\mathcal{L}$-CFON coloring $f'$ for $H_G$ as follows. For each original vertex $v \in V(H_G)$, let $f'(v) = f(v)$.
 By Observation \ref{obs subdivided}, each subdivided vertex sees a unique color in its open neighborhood. Now, we have to handle the needs of the original vertices in $H_G$. With each original vertex $v \in V(H_G)$, we associate a subdivided vertex $x_{uv}$, where $u \neq v$. We assign some color to $x_{uv}$ from its list. Finally, we color all the so far uncolored subdivided vertices $x_{pq} \in V(H)$ in such a way that $x_{pq}$ does not receive the unique color of the vertex $p$ or the vertex $q$. It is left to the reader to verify that this is indeed a valid CFON coloring of $H_G$. 

Now we prove that if $H_G$ is $k$-CFON-choosable, then $G$ is $k$-choosable. By Observation \ref{obs subdivided}, in any valid CFON coloring of $H_G$, the two neighbors of each subdivided vertex must have different colors. Specifically, every pair of original vertices in $H_G$ that have an edge between them in $G$ must have different colors. This ensures that a CFON coloring induced on original vertices in $H_G$ is a proper coloring of $G$.
\end{proof}

\begin{lemma}
\label{lem: subdivision pen}
A graph $G$ is k-choosable if and only if the graph $H'_G$ is k-CFON$^*$-choosable, where $k \ge 3$ is an integer.
\end{lemma}

\begin{proof}
Since a pendant vertex is attached to every original vertex in $H'_G$,  Observation \ref{obs subdivided pen} forces us to color every original vertex in any valid CFON$^*$ coloring of $H'_G$. Once we keep this observation in mind, the rest of the proof is similar to the proof of Lemma \ref{lem: subdivision} and hence we omit the proof. 
\end{proof}

Combining Theorem \ref{thm:choosability_known_results}, Observation \ref{obs:H_G-property_preserve}, Lemmas \ref{lem: subdivision} and \ref{lem: subdivision pen}, we have the following theorem. 
\begin{theorem}
\label{thm 3 cfon cfon* chhosability}
Both the \textsc{$k$-CFON-choosability  problem} and the \\ \textsc{$k$-CFON$^*$-choosability  problem} are $\Pi_2^{P}$-complete on: 
\begin{enumerate}
\item bipartite graphs, for any constant $k \geq 3$.
\item planar triangle-free graphs, when $k=3$.
\item planar graphs, when $k=4$.
\end{enumerate}
\end{theorem}

\subsection{Conflict-free closed neighborhood choosability}
\label{subsec:hardness_cfcn}
The only graph with a CFCN-choice-number equal to 1 is the edgeless graph, i.e., a graph consisting solely of isolated vertices. Therefore, deciding whether a graph is 1-CFCN-choosable can be done in polynomial time. 

Let $S$ be a finite subset of positive integers. Given a graph $G$, let $g^G_S$ be a function whose domain is $V(G)$ and co-domain is $S$. That is, $g^G_S:V(G)\rightarrow S$. A list assignment $\mathcal{L} = \{ L_v~:~v \in V(G)\}$ for $G$ is a \emph{$g^G_S$-assignment for $G$} if  $ \forall v \in V(G)$, $|L_v| = g^G_S(v)$. For a given function $g^G_S$, we say that $G$ is \emph{$g^G_S$-CFCN-choosable} if for every $g^G_S$-assignment $\mathcal{L}$, $G$ is $\mathcal{L}$-CFCN-colorable. Thus, a $k$-assignment for $G$ is a $g^G_{\{k\}}$-assignment, and the statement `$G$ is $k$-CFCN-choosable' is equivalent to the statement `$G$ is $g^G_{\{k\}}$-CFCN-choosable'.  For a given function  $g^G_{\{2,k\}}$, we define the \textsc{$g^G_{\{2,k\}}$-CFCN-choosability problem} below. 



\defproblem{\textsc{$g^G_{\{2,k\}}$-CFCN-choosability problem ($g^G_{\{2,k\}}$-CFCN-CH problem)}}{A graph $G$, a function $g^G_{\{2,k\}}: V(G)\rightarrow \{2,k\}$.}{Is $G$ $g^G_{\{2,k\}}$-CFCN-choosable?}

\noindent It is easy to see that the \textsc{$g^G_{\{2,k\}}$-CFCN-CH problem} is in $\Pi_2^{P}$. The input is a graph $G$ and a function $g^G_{\{2,k\}}:V(G)\rightarrow \{2,k\}$. Given a $g^G_{\{2,k\}}$-assignment $\mathcal{L}= \{L_v ~:~ v \in V(G)\}$, and a coloring function $f: V(G) \longrightarrow \bigcup_{v \in V(G)}L_v$, a polynomial time verifier verifies the following (i) every vertex $v \in V(G)$ is assigned a color from its list, i.e., $f(v) \in L(v)$, (ii) every vertex has some unique color in its closed neighborhood.

Let $k \geq 3$ be an integer constant. Let $G$ be a graph on $n$ vertices. Below, we present the construction of a graph $H^k_G$ from $G$. 

\paragraph{Construction of $H^k_G$:} Let $N = 2{nk \choose 2}$.  We construct $H^k_G$ by attaching $N$ distinct pendants to each vertex in $G$. Thus, $H^k_G$ has $n+ Nn$ vertices in total. The vertices of $G$ are called \emph{original vertices} in $H^k_G$, and the remaining vertices in $H^k_G$ are called \emph{pendant vertices}.
It is easy to see that $H^k_G$ can be constructed from $G$ in $O(n^3)$  time as $k$ is a constant.

\begin{observation}
\label{obs:H_^k_G-property_preserve}
$H^k_G$ preserves graph properties like bipartiteness, triangle-freeness, and planarity.
\end{observation} 
We now prove the following lemma.

\begin{lemma}
\label{lem:(2,k)-choosability-reduction}
$G$ is $k$-choosable iff $H^k_G$ is $g^G_{\{2,k\}}$-CFCN choosable, where 
$$
g^G_{\{2,k\}}(v) = 
\begin{cases}
2, \mbox{ if $v$ is a pendant vertex in } H^k_G\\
k, \mbox{ otherwise}
\end{cases}
$$
\end{lemma}

\begin{proof}
Suppose $G$ is $k$-choosable. We are given a function $g^G_{\{2,k\}}: V(H^k_G) \rightarrow \{2,k\}$ that satisfies $g^G_{\{2,k\}}(v) = 2$, if $v$ is a pendant vertex in $H^k_G$ and $g^G_{\{2,k\}}(v) = k$, otherwise. Let $\mathcal{L} = \{L_v~:~v \in V(H^k_G)\}$ be a $g^G_{\{2,k\}}$-assignment for $H^k_G$. Let $\mathcal{L}_G$ be $\mathcal{L}$ restricted to the original vertices in $H^k_G$. Let $f$ be a proper $\mathcal{L}_G$-coloring of $G$. We now obtain an $\mathcal{L}$-CFCN coloring $f'$ for $H^k_G$ as follows. For each original vertex $v \in V(H^k_G)$, let $f'(v) = f(v)$. For each pendant vertex $p \in V(H^k_G)$, $f'(p)$ can be any color (from the list $L_p$) other than the color received,  under $f'$, by $p$'s only neighbor. It can be verified that $f'$ is a valid $\mathcal{L}$-CFCN coloring of $H^k_G$.   

Suppose $H^k_G$ is $g^G_{\{2,k\}}$-CFCN choosable, where $g^G_{\{2,k\}}(v) = 2$ if and only if $v$ is a pendant vertex in $H^k_G$. Let $\mathcal{L} = \{L_v~:~v \in V(G)\}$ be a $k$-assignment for $G$. Let palette of $\mathcal{L}$ be defined as $\mathcal{P}_{\mathcal{L}} := \bigcup_{v \in V(G)}L_v$.  Clearly, $P := |\mathcal{P}_{\mathcal{L}}| \leq nk$. Below, we define a $g^G_{\{2,k\}}$-assignment $\mathcal{L}' = \{L'_v~:~v \in V(H^k_G)\}$ for $H^k_G$. For each original vertex $v \in H^k_G$, we have $L'_v = L_v$. Recall that, each original vertex has $N$ (which is at least  $2{P \choose 2}$) pendants attached to it. For the pendant vertices, we assign $2$-sized lists that are subsets of $\mathcal{P}_{\mathcal{L}}$ in such a way that for every 2-sized subset $S$ of $\mathcal{P}_{\mathcal{L}}$ and for every original vertex $v \in V(H^k_G)$, at least two pendant vertices attached to $v$ have $S$ as its list under $\mathcal{L}'$. Thus, $\mathcal{P}_{\mathcal{L}'} = \mathcal{P}_{\mathcal{L}}$. Let $f'$ be an  $\mathcal{L}'$-CFCN coloring of $H^k_G$. Recall that $L'_v = L_v$, for all original vertices $v \in V(H^k_G)$. We claim that $f'$ restricted to the original vertices of $H^k_G$ is an $\mathcal{L}$-coloring of $G$. In other words, under $f'$, every original vertex in $H^k_G$ sees its own color as the unique color in its closed neighborhood. This is because it sees every other color in the palette $\mathcal{P}_{\mathcal{L}}$ at least twice among the pendants attached to it. 
\end{proof}

Combining Theorem \ref{thm:choosability_known_results}, Lemma \ref{lem:(2,k)-choosability-reduction}, and Observation \ref{obs:H_^k_G-property_preserve}, we have the following theorem.  
\begin{theorem}
\label{thm 23 cfcn chhosability}
The \textsc{$g^G_{\{2,k\}}$-CFCN-choosability  problem} is $\Pi_2^{P}$-complete on:
\begin{enumerate}
\item bipartite graphs, for any constant $k \geq 3$.
\item planar triangle-free graphs, when $k=3$.
\item planar graphs, when $k=4$.
\end{enumerate}
\end{theorem}

\section{Concluding remarks}
We have established hardness results for $k$-CFON/CFON$^*$-choosability and for $(2,k)$-CFCN-choosability, for all $k \ge 3$. However, the complexity of the following problems remain open : 
(i) \textsc{$k$-CFON$^*$-choosability} when $k =1, 2$,
(ii) \textsc{$k$-CFON-choosability} when $k =2$,
(iii) \textsc{$k$-CFCN$^*$-choosability} when $k \ge 1$, and
(iv) \textsc{$k$-CFCN-choosability} when $k \ge 2$.

\bibliographystyle{plain}
\bibliography{Thesis_reference}
\end{document}